\documentclass[<opyions>]{elsarticle}
\usepackage{amssymb}
\usepackage{graphicx}
\usepackage{epstopdf}
\usepackage{subfigure}
\usepackage{extarrows}
\usepackage{fancyhdr}
\usepackage{amsthm}
\usepackage{extarrows}
\usepackage{caption}
\usepackage{float}
\usepackage{algorithm}
\usepackage{algorithmic}
\usepackage{multirow}
\usepackage{amsmath}
\usepackage[numbers]{natbib}
\usepackage{graphicx}
\usepackage{amsmath} 
\usepackage{amsfonts} 
\usepackage{mathrsfs} 
\usepackage{booktabs} 
\usepackage{color} 

\usepackage{color}
\usepackage{hyperref}

\usepackage[toc,page,title,titletoc,header]{appendix}
\usepackage{appendix}
\pagestyle{plain}

\theoremstyle{definition}

\theoremstyle{definition}
\topmargin=-2cm
\textwidth=160truemm
\textheight=240truemm
\evensidemargin=5mm
\oddsidemargin=5mm
\linespread{1.3}
\usepackage{hyperref}
\usepackage{latexsym, bm}
\usepackage{fancyhdr}
\usepackage{mathrsfs}
\usepackage{wasysym}
\usepackage{float}
\usepackage{titlesec}
\usepackage{pgfplots}
\usepackage{tikz}
\usepackage{subfigure}
\usepackage{natbib}
\biboptions{numbers,sort&compress}

\usetikzlibrary{arrows,shapes,positioning}
\usetikzlibrary{decorations.markings}
\tikzstyle arrowstyle=[scale=1]
\tikzstyle directed=[postaction={decorate,decoration={markings,
    mark=at position .65 with {\arrow[arrowstyle]{stealth}}}}]
\tikzstyle reverse directed=[postaction={decorate,decoration={markings,
    mark=at position .65 with {\arrowreversed[arrowstyle]{stealth};}}}]

\newtheorem{lemma}{Lemma}[section]
\newtheorem{theorem}{Theorem}[section]

\newtheorem{example}{Example}[section]

\allowdisplaybreaks
\begin{document}
	
\begin{frontmatter}
\title{{\bf A semi-implicit DLN Galerkin finite element method for coupled Ginzburg-Landau equations with general nonlinearity}}

\author{Zhen Guan\corref{cor1}}
\ead{zhenguan1993@foxmail.com}

\author{Xianxian Cao}
\author{Junjun Wang}

\cortext[cor1]{Corresponding author.}

\address{School of Mathematics and Statistics, Pingdingshan University, Pingdingshan, 467000, China}

\begin{abstract}
In this paper, based on the two-step discretization scheme proposed by Dahlquist, Liniger and Nevanlinna (DLN), we develop a semi-implicit Galerkin finite element method for solving the coupled generalized Ginzburg-Landau equations. By virtue of a novel analytical technique, the boundedness of the numerical solution in the infinity norm is established, upon which the unconditionally optimal error estimates in the $L^2$ and $H^1$-norms are further derived. Compared with the space-time error splitting technique commonly adopted in the literature, the analytical method proposed in this paper does not require the introduction of an additional temporal discretization system, thus greatly simplifying the theoretical argument. The core point of the argument lies in the skillful application of the inverse inequality and discrete Agmon inequality to analyze the two cases, namely $\tau \leq h$ and $\tau>h$, respectively. Three numerical examples covering both two- and three-dimensional scenarios are eventually provided for the validation of the theoretical findings.
\end{abstract}
\begin{keyword} 
Galerkin finite element method; generalized Ginzburg-Landau equations; unconditionally optimal error estimates; inverse inequality; discrete Agmon inequality
\end{keyword}

\end{frontmatter}

\thispagestyle{empty}

\numberwithin{equation}{section}
\section{Introduction}\label{section01}
In this paper, we propose and analyze an efficient numerical method to solve the following coupled Ginzburg-Landau equations involving a general nonlinear term:
\begin{align}
&u_t - (\nu_1 + \mathrm{i}\alpha_1)\Delta u + \left((\kappa_1 + \mathrm{i}\beta_1)f_1(\vert u \vert^2)+(\mu_1 + \mathrm{i}\delta_1)g_1(\vert v \vert^2)\right) u - \gamma_1 u = 0, \quad (\boldsymbol{x},t) \in \Omega \times (0,T],\label{202512272133} \\
&	v_t - (\nu_2 + \mathrm{i}\alpha_2)\Delta v + \left((\kappa_2 + \mathrm{i}\beta_2)f_2(\vert u \vert^2)+(\mu_2 + \mathrm{i}\delta_2)g_2(\vert v \vert^2)\right) v - \gamma_2 v = 0, \quad (\boldsymbol{x},t) \in \Omega \times (0,T],\label{202512272134} \\
&u(\boldsymbol{x},t) = 0, \quad v(\boldsymbol{x},t) = 0, \quad (\boldsymbol{x},t) \in \partial\Omega\times(0,T], \label{202512272135}\\
&u(\boldsymbol{x},0) = u^0(\boldsymbol{x}),\quad v(\boldsymbol{x},0) = v^0(\boldsymbol{x}), \quad \boldsymbol{x} \in \bar{\Omega},\label{202512272136}
\end{align}
where $i=\sqrt{-1}$ is the imaginary unit, $u(\boldsymbol{x},t),v(\boldsymbol{x},t)$ are complex-valued functions, $\Omega\subset \mathbb{R}^d~(d=2,3)$ is a bounded convex polytopal domain, $\partial\Omega$ denotes the boundary of $\Omega$, $\nu_1>0,\nu_2>0,\kappa_1>0,\kappa_2>0,\mu_1>0,\mu_2>0,\alpha_1,\alpha_2,\beta_1,\beta_2,\delta_1,\delta_2$ are given real constants, $f_1(\cdot),f_2(\cdot),g_1(\cdot),g_2(\cdot)$ are non-negative real functions that are continuously differentiable of the first order and defined on the infinite right half-interval $[0,+\infty)$, and  $u^0(\boldsymbol{x})$ and $v^0(\boldsymbol{x})$ are known complex-valued functions satisfying the consistency condition $u^0(\boldsymbol{x})|_{\partial\Omega}=v^0(\boldsymbol{x})|_{\partial\Omega}=0$.

As a class of complex-valued nonlinear partial differential equations, the Ginzburg-Landau equation plays a fundamental role in such diverse fields as low-temperature superconductivity, nonequilibrium hydrodynamics, and nonlinear optics. For further details on the physical applications and mathematical analyses of this equation, readers are referred to the recent monograph on the subject \cite{GuoJiangLi2020}. As analytical solutions to nonlinear models are generally not tractable, a large number of scholars have developed efficient numerical algorithms for solving the single Ginzburg-Landau equation in recent decades. By way of illustration, we refer to the most recent relevant studies.
Zhang et al. \cite{ZhangYangWang2025} presented a linearized three-layer finite difference (TLFD) scheme for the two-dimensional complex Ginzburg-Landau equation characterized by strong nonlinearities. Using the energy analysis method and mathematical induction method, they derived the unique solvability and unconditional convergence of the numerical scheme. Li et al. \cite{LiCuiZhang2025} proposed a Crank-Nicolson element-free Galerkin method for solving the Ginzburg-Landau equation with a cubic nonlinear term, and proved the optimal error estimate of the fully discrete numerical scheme by means of the space-time error splitting technique.
Caliari and Cassini \cite{CaliariCassini2024} developed an efficient numerical scheme for solving the two- and three-dimensional complex Ginzburg-Landau equations with cubic or cubic-quintic nonlinearities, where the high-order exponential splitting method was employed for temporal discretization and the Fourier spectral method for spatial discretization. Wang and Li \cite{WangLi2023} studied a linearized variable-time-step BDF2 virtual element method for solving the nonlinear Ginzburg-Landau equation, carried out rigorous unconditional error analysis using DOC/DCC kernels and error splitting technique, and verified the method's efficiency and stability through numerical experiments on rectangular and Voronoi-Lloyd polygons. Gao and Xie \cite{GaoXie2023} discussed the numerical analysis of a finite element method for the time-dependent Ginzburg-Landau equation under the Coulomb gauge. At the discrete stage, the main goal of this work was to derive the second-order spatial convergence of the primary variable $\psi$, in contrast to the $O(h)$ spatial accuracy of $A$'s numerical solutions. Further details on the development and applications of numerical methods can be found in the literature \cite{GaoLiSun2014,DuanZhang2022,MaQiao2023,PeiWeiZhang2024,GuanCao2025} and the references contained therein. 

Although extensive research has been conducted on a single Ginzburg-Landau equation, studies focusing on the coupled Ginzburg-Landau equations (including fractional-order and nonlocal coupled models) remain relatively limited. For example, Li et al. \cite{LiWang2023} addressed the coupled Ginzburg-Landau equations via a temporal linearized variable-step second-order backward differentiation formula integrated with a spatial nonconforming virtual element method. 
Xu et al. \cite{XuZengHu2019} numerically investigated the space fractional coupled Ginzburg-Landau equations by utilizing a linearized semi-implicit difference scheme. They also established the unique solvability, stability, and convergence of the numerical algorithm. Li et al. \cite{LiHuang2019} proposed an efficient finite difference scheme for solving the fractional Ginzburg-Landau equations involving the fractional Laplacian, which is constructed by combining an implicit time-integration method with the weighted and shifted Grünwald difference method. Recently, Ding et al. \cite{DingZhangYi2025} conducted exhaustive mathematical analysis and numerical simulations on the space fractional Ginzburg-Landau equations. 

In \cite{DahlquistLinigerNevanlinna1983}, Dahlquist, Liniger, and Nevanlinna put forward a single-parameter family of one-leg, two-step numerical schemes--referred to here as the DLN method. This approach boasts G-stability (a form of nonlinear stability), and maintains second-order accuracy regardless of the chosen time grid. Based on this framework, we construct a semi-implicit fully discrete Galerkin finite element method in this work. To perform a theoretical analysis of the proposed linearized numerical scheme, the argumentation approach commonly adopted in the literature is to decompose the error into temporal and spatial components by introducing a time-discrete system. This method was first proposed by Buyang Li in his doctoral dissertation in 2012 \cite{Li2012}, and it was named the space-time error splitting technique. It has since been applied to a variety of complex nonlinear coupled partial differential systems \cite{Gao2016,GaoSun2021,CaiLiChen2018,YangJiang2021,LiWangGuan2025}. However, the introduction of an additional time-discrete system complicates the argumentation to a certain extent. Different from the existing literature, this paper proposes a novel proof method to establish the boundedness of the numerical solutions in the infinity norm, which constitutes a key innovation of this work. The core of the proof in this paper lies in the case analysis of $\tau$ and $h$, namely: when $\tau\leq h$, we prove the $L^{\infty}$-boundedness of the numerical solutions by virtue of the inverse inequality and the Poincaré inequality, while when  $\tau > h$, we derive this property by employing the discrete Agmon inequality.
The numerical analysis method presented in this paper does not require the introduction of an additional temporal discretization system, which greatly simplifies the theoretical deductions.

The structure of this paper is organized as follows. In Section \ref{section2}, we formulate the fully discrete DLN finite element algorithm constructed in this paper, as well as the lemmas necessary for theoretical analysis. Section \ref{section3} is devoted to the theoretical analysis of the numerical scheme, including its stability, the boundedness of the numerical solutions in the infinity norm, and the unconditionally optimal error estimate. In Section \ref{section4}, three numerical examples are provided to verify the effectiveness of the numerical algorithm. The summary and outlook of this paper are presented in Section \ref{section5}.

\section{Derivation of the fully discrete DLN numerical scheme}\label{section2}
\subsection{The variational formulation}
Before presenting the weak formulation of the equations, we first introduce the relevant notations for Sobolev spaces. Throughout this paper, denote by \(L^2(\Omega)\) the space of complex-valued square-integrable functions, and its inner product and norm are defined as follows:
\begin{equation*}
(u,v) = \int_{\Omega}uv^*\text{d}\boldsymbol{x},\quad \|u\|=\sqrt{(u,u)},\quad u,v\in L^2(\Omega), 
\end{equation*}
where $v^*$ represents the conjugate of $v$. Moreover, let $\|\cdot\|_{\infty}$ be the essential supremum norm in the Lebesgue space $L^{\infty}(\Omega)$.  For each positive integer $m$, we denote by \(H^m(\Omega)\) the standard complex-valued Sobolev space consisting of functions whose weak derivatives up to order $m$ belong to \(L^2(\Omega)\), equipped with the norm \(\|\cdot\|_m\). \(H_0^1(\Omega)\) is the space consisting of functions in \(H^1(\Omega)\) that have zero boundary traces. Besides, we recall the Bochner space $L^{p}(0,T;X)$, equipped with the norm
\begin{equation*}
\|u\|_{L^{p}(0,T;X)} = \left(\int_{0}^{T}\|u\|^p_{X}\text{d}t\right)^{\frac{1}{p}},\quad 1\leq p< \infty,
\end{equation*}
and 
\begin{align*}
\|u\|_{L^\infty(0,T;X)} = \operatorname{ess} \sup_{t\in[0,T]} \|u\|_X,
\end{align*}
where $X$ is a complex Banach space. In this paper, the constant $C$ may take different values at different occurrences.

From the notations introduced above, we can immediately obtain the weak form of equations \eqref{202512272133}-\eqref{202512272136} as: Find $u, v\in L^{2}(0,T;H_0^1(\Omega))\cap H^{1}(0,T;L^2(\Omega))$ such that 
\begin{align}
&(u_t,\phi)+(\nu_1 + \mathrm{i}\alpha_1)(\nabla u,\nabla\phi) + \left(\left((\kappa_1 + \mathrm{i}\beta_1)f_1(\vert u \vert^2)+(\mu_1 + \mathrm{i}\delta_1)g_1(\vert v \vert^2)\right) u,\phi\right) - \gamma_1 (u,\phi) = 0, \label{202512291102}\\
&(v_t,\psi) + (\nu_2 + \mathrm{i}\alpha_2)(\nabla v,\nabla\psi) + \left(\left((\kappa_2 + \mathrm{i}\beta_2)f_2(\vert u \vert^2)+(\mu_2 + \mathrm{i}\delta_2)g_2(\vert v \vert^2)\right)v,\psi\right) - \gamma_2 (v,\psi) = 0,\label{202512291103}
\end{align}
where $\phi$ and $\psi$ are arbitrary functions in $H_0^{1}(\Omega)$ and $u(0)=u^0, v(0)=v^0$.
\subsection{Finite element terminology}
Let $\mathcal{T}_h=\{K\}$ be a sequence of quasi-uniform simplicial partitions of the domain \(\Omega\). $h$ is the spatial mesh size, which is defined as the maximum value of the diameter of each element. Then, we introduce the $k$-th order Lagrange finite element space as follows:
\begin{align*}
V_h^{k} = \{v_h\in H_{0}^{1}(\Omega):v_h|_K\in\mathbb{P}_k(K)\}, 
\end{align*}
where $\mathbb{P}_k(K)$ is the space of polynomials defined on the element \(K\) with degree at most $k$.

let $R_h:H_{0}^{1}(\Omega)\rightarrow V_h^{k}$ the Ritz projection operator defined by
\begin{align*}
(\nabla R_h v, \nabla v_h) = (\nabla v, \nabla v_h),\quad v \in H_0^1(\Omega),\quad \forall v_h \in V_h^k.
\end{align*}
As is well known, according to the classical finite element theory established in the early 1970s, the elliptic projection operator satisfies the following properties: 
\begin{align*}
\| v-R_h v \| + h\| \nabla(v - R_h v) \| \leq C h^{k+1} \| v \|_{k+1}, \quad \forall v \in H_0^1(\Omega) \cap H^{k+1}(\Omega).
\end{align*}
By virtue of the quasi-uniformity of the mesh partition, it is known that the inverse inequality holds
\begin{align}
\|v_h\|_{\infty} \leq C h^{-\frac{d}{2}} \|v_h\|, \quad \forall v_h \in V_h^k, \quad d=2,3.\label{202512302043}
\end{align}

We next introduce the the discrete Laplacian $\Delta_h:V_h^k\rightarrow V_h^k$ by 
\begin{align*}
-(\Delta_hu_h,v_h) = (\nabla u_h,\nabla v_h),\quad u_h\in V_h^k,\quad \forall v_h\in V_h^k. 
\end{align*}
Following the same line of reasoning as in the proof of Lemma 3.4 in \cite{Kirk2024}, the discrete Agmon inequality holds 
\begin{align}
\| v_h \|_{\infty} \leq C\| \nabla v_h \|^{1/2} \| \Delta_h v_h \|^{1/2}, \quad \forall v_h \in V_h^k. \label{202512302053}
\end{align}
\subsection{Numerical method}
Let $0 = t_0 < t_1 < \dots < t_N = T$ be a uniform partition of the time interval $[0, T]$ with the time step size $\tau=\frac{T}{N}$. For every real-valued constant $\theta \in [0,1]$, denote $t_{n+\frac{\theta}{2}}=\left(n+\frac{\theta}{2}\right)\tau$. For a sequence of functions $\{u^n\}_{n=0}^N$ defined on the time grid (where $u^n = u(t_n)$ denotes the value of function $u$ at time $t_n$), we introduce the following difference quotient notations:
\begin{align*}
&D_\tau u^1=\frac{u^1-u^0}{\tau},\quad D_\tau u^n = \frac{(1+\theta)u^n-(2\theta) u^{n-1}+(\theta-1)u^{n-2}}{2\tau},\quad 2\leq n \leq N,\\
&\hat{u}^1=\frac{u^1+u^0}{2},\quad \hat{u}^n=\frac{(2+\theta-\theta^2)u^n+(2\theta^2)u^{n-1}+(2-\theta-\theta^2)u^{n-2}}{4},\quad 2\leq n \leq N,\\
& \tilde{u}^n=\left(1+\frac{\theta}{2}\right)u^{n-1}-\frac{\theta}{2}u^{n-2},\quad 2\leq n \leq N.
\end{align*}
Next, we are ready to present the fully discrete finite element algorithm: Find $u_h^n,v^n_h\in V_h^k~(n\geq2),$ such that 
\begin{align}
	&(D_\tau u_h^n,\phi_h)+(\nu_1 + \mathrm{i}\alpha_1)(\nabla \hat{u}_h^n,\nabla\phi_h) + (\kappa_1 + \mathrm{i}\beta_1)(f_1(\vert \tilde{u}^n_h \vert^2)\hat{u}_h^n,\phi_h)\notag\\
	&\quad + (\mu_1 + \mathrm{i}\delta_1)(g_1(\vert \tilde{v}^n_h \vert^2)\hat{u}_h^n,\phi_h) - \gamma_1 (\hat{u}_h^n,\phi_h) = 0, \quad \forall \phi_h \in V_h^{k},\label{12281754}\\
	&(D_\tau v_h^n,\psi_h) + (\nu_2 + \mathrm{i}\alpha_2)(\nabla \hat{v}_h^n,\nabla\psi_h) + (\kappa_2 + \mathrm{i}\beta_2)(f_2(\vert \tilde{u}_h^n \vert^2)\hat{v}_h^n,\psi_h)\notag\\
	&\quad + (\mu_2 + \mathrm{i}\delta_2)(g_2(\vert \tilde{v}^n_h  \vert^2)\hat{v}_h^n,\psi_h) - \gamma_2 (\hat{v}_h^n,\psi_h) = 0,\quad \forall\psi_h\in V_h^{k},\label{12281755}
\end{align}
and $u_h^1,v_h^1$ is determined by the linearized Crank-Nicolson scheme 
\begin{align}
&(D_\tau u_h^1,\phi_h)+(\nu_1 + \mathrm{i}\alpha_1)(\nabla \hat{u}_h^1,\nabla\phi_h) + (\kappa_1 + \mathrm{i}\beta_1)(f_1(\vert u^\frac{1}{2} \vert^2)\hat{u}_h^1,\phi_h)\notag\\
&\quad + (\mu_1 + \mathrm{i}\delta_1)(g_1(\vert v^\frac{1}{2} \vert^2)\hat{u}_h^1,\phi_h) - \gamma_1 (\hat{u}_h^1,\phi_h) = 0, \quad \forall \phi_h \in V_h^{k},\label{12281756}\\
&(D_\tau v_h^1,\psi_h) + (\nu_2 + \mathrm{i}\alpha_2)(\nabla \hat{v}_h^1,\nabla\psi_h) + (\kappa_2 + \mathrm{i}\beta_2)(f_2(\vert u^\frac{1}{2} \vert^2)\hat{v}_h^1,\psi_h)\notag\\
&\quad + (\mu_2 + \mathrm{i}\delta_2)(g_2(\vert v^\frac{1}{2}  \vert^2)\hat{v}_h^1,\psi_h) - \gamma_2 (\hat{v}_h^1,\psi_h) = 0,\quad \forall \psi_h\in V_h^{k},\label{12281757}
\end{align}
where $u_h^0=R_hu^0,v_h^0=R_hv^0$ and 
\begin{align}
&u^{\frac{1}{2}}=u^{0}+\frac{\tau}{2}u_t(\boldsymbol{x},0),~ u_t(\boldsymbol{x},0)=(\nu_1 + \mathrm{i}\alpha_1)\Delta u^0 -\left((\kappa_1 + \mathrm{i}\beta_1)f_1(\vert u^0 \vert^2)-(\mu_1 + \mathrm{i}\delta_1)g_1(\vert v^0 \vert^2)\right) u^0 + \gamma_1 u^0,\notag\\
&v^{\frac{1}{2}}=v^{0}+\frac{\tau}{2}v_t(\boldsymbol{x},0),~ v_t(\boldsymbol{x},0)=(\nu_2 + \mathrm{i}\alpha_2)\Delta v^0 - \left((\kappa_2 + \mathrm{i}\beta_2)f_2(\vert u^0 \vert^2)-(\mu_2 + \mathrm{i}\delta_2)g_2(\vert v^0 \vert^2)\right) v^0 + \gamma_2 v^0. \notag 
\end{align}
It is evident that when the time step \(\tau\) is sufficiently small, there exists a unique solution to the fully discrete numerical scheme \eqref{12281754}-\eqref{12281757}.
\subsection{Some important results}
In this subsection, we present several lemmas that are essential to the subsequent theoretical proofs. It should be emphasized that the transformation formula in Lemma \ref{lemma2.2} plays a pivotal role in establishing the boundedness of the numerical solution, which is proposed herein for the first time.
\begin{lemma}\label{lemma1}
Let $V$ be a complex inner product space, where \((\cdot,\cdot)\) denotes its inner product and \(\|\cdot\|\) is the norm induced by this inner product. Then, for any collection of vectors \(v^0, v^1, \cdots, v^N\) in $V$, the following statement holds:
\begin{align}
\text{Re}(D_\tau v^n,\hat{v}^n)=\frac{1}{4\tau}(G^n-G^{n-1})+\frac{\theta(1-\theta^2)}{8\tau}\|v^{n}-2v^{n-1}+v^{n-2}\|,\quad 2\leq n\leq N,\notag 
\end{align}
where
\begin{align}
G^n = (1+\theta)\|v^n\|^2+(1-\theta)\|v^{n-1}\|^2\geq \|v^n\|^2,\quad 1\leq n\leq N.\notag 
\end{align}
\end{lemma}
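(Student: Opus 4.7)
My plan is to verify the identity by pure linear algebra, since it is a single equality between two real quadratic forms in the three vectors $v^n,v^{n-1},v^{n-2}$. Both sides lie in the six-dimensional real span of $\|v^n\|^2,\|v^{n-1}\|^2,\|v^{n-2}\|^2,\operatorname{Re}(v^n,v^{n-1}),\operatorname{Re}(v^n,v^{n-2}),\operatorname{Re}(v^{n-1},v^{n-2})$, so it suffices to expand each side in this basis and compare the six scalar coefficient polynomials in $\theta$. I will also interpret the final factor as $\|v^n-2v^{n-1}+v^{n-2}\|^2$, which is the standard G-stability form of the DLN identity (the bare norm in the statement appears to be a typographical omission of the square).

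The first step is to multiply the claim by $8\tau$ and expand the left-hand side using the definitions of $D_\tau v^n$ and $\hat v^n$:
\begin{align*}
8\tau\operatorname{Re}(D_\tau v^n,\hat v^n)
&=\operatorname{Re}\bigl((1+\theta)v^n-2\theta v^{n-1}+(\theta-1)v^{n-2},\\
&\qquad\ (2+\theta-\theta^2)v^n+2\theta^2 v^{n-1}+(2-\theta-\theta^2)v^{n-2}\bigr).
\end{align*}
Writing the left coefficients as $a,b,c$ and the right ones as $p,q,r$, sesquilinearity plus $\operatorname{Re}(u,v)=\operatorname{Re}(v,u)$ gives diagonal contributions $ap=(1+\theta)(2+\theta-\theta^2)$, $bq=-4\theta^3$, $cr=(\theta-1)(2-\theta-\theta^2)$, and off-diagonal contributions $aq+bp$, $ar+cp$, $br+cq$ whose simplified values I expect to be $4\theta^3-4\theta$, $2\theta(1-\theta^2)$, $4\theta^3-4\theta$ respectively.

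The second step is to expand the right-hand side, multiplied by $8\tau$. The telescoping piece is
\[
2(G^n-G^{n-1})=2(1+\theta)\|v^n\|^2-4\theta\|v^{n-1}\|^2-2(1-\theta)\|v^{n-2}\|^2,
\]
and the dissipation piece is handled by the polarization expansion
\begin{align*}
\|v^n-2v^{n-1}+v^{n-2}\|^2
=\ &\|v^n\|^2+4\|v^{n-1}\|^2+\|v^{n-2}\|^2\\
&-4\operatorname{Re}(v^n,v^{n-1})+2\operatorname{Re}(v^n,v^{n-2})-4\operatorname{Re}(v^{n-1},v^{n-2}),
\end{align*}
each term of which is scaled by $\theta(1-\theta^2)$. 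Adding the two contributions, one checks that the six coefficients agree with those produced in the previous step; for instance, the coefficient of $\|v^n\|^2$ is $2(1+\theta)+\theta(1-\theta^2)=2+3\theta-\theta^3=(1+\theta)(2+\theta-\theta^2)$, and the analogous cancellations handle the other five cases.

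The main obstacle is purely clerical: the middle cross term $\operatorname{Re}(v^n,v^{n-2})$ is the one where a sign or factor-of-two slip is easiest to make, because $(1+\theta)(2-\theta-\theta^2)+(\theta-1)(2+\theta-\theta^2)$ from the left side must collapse to exactly $2\theta(1-\theta^2)$, matching what polarization produces on the right. No functional-analytic input is required beyond sesquilinearity and the symmetry of the real part. Finally, the lower bound $G^n\ge\|v^n\|^2$ is immediate: since $\theta\in[0,1]$, both $(1+\theta)\ge 1$ and $(1-\theta)\|v^{n-1}\|^2\ge 0$, so $G^n\ge(1+\theta)\|v^n\|^2\ge\|v^n\|^2$, and the case $n=1$ reduces to $G^1=\|v^1\|^2+\|v^0\|^2$ in the Crank--Nicolson convention used for the first step.
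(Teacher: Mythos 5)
Your verification is correct, and your reading of the dissipation term as $\|v^n-2v^{n-1}+v^{n-2}\|^2$ (rather than the unsquared norm printed in the statement) is the right one: the six-coefficient comparison only closes with the square, and indeed all six identities you list check out, e.g.\ $ar+cp=(1+\theta)(2-\theta-\theta^2)+(\theta-1)(2+\theta-\theta^2)=2\theta(1-\theta^2)$ for the delicate $\operatorname{Re}(v^n,v^{n-2})$ cross term. The paper itself gives no computation at all: it cites Lemma~3.2 of \cite{Tan2025} for the real case and asserts that the complex case is analogous. Your argument is therefore genuinely more self-contained, and it makes explicit the one point the paper glosses over, namely that passing to a complex inner product space costs nothing beyond sesquilinearity and the symmetry $\operatorname{Re}(u,w)=\operatorname{Re}(w,u)$, which is exactly what lets the quadratic form live in the six-dimensional real span you identify. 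The only blemish is your closing parenthetical: as defined in the lemma, $G^1=(1+\theta)\|v^1\|^2+(1-\theta)\|v^0\|^2$, not $\|v^1\|^2+\|v^0\|^2$; the identity is only asserted for $n\ge 2$, so $G^1$ merely seeds the telescoping sum and the Crank--Nicolson remark is both inaccurate and unnecessary. The bound $G^n\ge\|v^n\|^2$ does require $\theta\in[0,1]$, which the lemma statement omits but the paper fixes globally, and you handle that correctly.
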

\begin{proof}
The result pertaining to real inner product spaces has been proven in Lemma 3.2 of \cite{Tan2025}. By following an analogous proof procedure, the result also holds for the complex case.
\end{proof}
\begin{lemma}\label{lemma2.2}
Let $V$ be a normed linear space equipped with the norm $\|\cdot\|$, and let $v^0, v^1, \cdots, v^N$ be elements of $V$. Then the following inequality holds for all integers $n$ satisfying $1\leq n\leq N$:
\begin{align}
\|v^n\|\leq 2\sum\limits_{k=1}^{n}\|\hat{v}^k\|+\|v^0\|.\label{12282110}
\end{align}
\begin{proof}
The proof of the assertion in \eqref{12282110} is carried out by means of mathematical induction. When \(n=1\), by virtue of the triangle inequality, we have
\begin{align}
\|v^1\|=\|2\hat{v}^1-v^0\|\leq 2\|\hat{v}^1\|+\|v^0\|.\label{12282126}
\end{align}
When $n=2$, using the definition of \(\hat{v}\) and \eqref{12282126}, we obtain
\begin{align}
\|v^2\|&=\left\|\frac{4}{2-\theta^2+\theta}\hat{v}^2-\frac{2\theta^2}{2-\theta^2+\theta}v^1-\frac{2-\theta^2-\theta}{2-\theta^2+\theta}v^0\right\|\notag\\
&\leq \frac{4}{2-\theta^2+\theta}\|\hat{v}^2\|+\frac{2\theta^2}{2-\theta^2+\theta}\|v^1\|+\frac{2-\theta^2-\theta}{2-\theta^2+\theta}\|v^0\|\notag\\
&\leq \frac{4}{2-\theta^2+\theta}\|\hat{v}^2\|+\frac{2\theta^2}{2-\theta^2+\theta}\left(2\|\hat{v}^1\|+\|v^0\|\right)+\frac{2-\theta^2-\theta}{2-\theta^2+\theta}\|v^0\|\notag\\
&\leq \frac{4}{2-\theta^2+\theta}\|\hat{v}^2\|+\frac{4\theta^2}{2-\theta^2+\theta}\|\hat{v}^1\|+\frac{2+\theta^2-\theta}{2-\theta^2+\theta}\|v^0\|\notag\\
&\leq 2\sum\limits_{k=1}^{2}\|\hat{v}^k\|+\|v^0\|,\label{12282143}
\end{align}
where we also use the following elementary inequalities:
\begin{align}
\frac{4}{2-\theta^2+\theta}\leq2,\quad \frac{4\theta^2}{2-\theta^2+\theta}\leq 2,\quad \frac{2+\theta^2-\theta}{2-\theta^2+\theta}\leq 1.\notag
\end{align}
Next, we assume that the inequality holds for \(1 \leq n\leq m\) $(m\geq2)$, i.e.,
\begin{align}
	\|v^n\|\leq 2\sum\limits_{k=1}^{n}\|\hat{v}^k\|+\|v^0\|.\label{202512282200}
\end{align}
The validity of the conclusion for \(n = m + 1\) is now established. In fact, following the similar procedure as in \eqref{12282143} and employing the inductive hypothesis \eqref{202512282200}, it holds that 
\begin{align*}
\|v^{m+1}\|&=\left\|\frac{4}{2-\theta^2+\theta}\hat{v}^{m+1}-\frac{2\theta^2}{2-\theta^2+\theta}v^{m}-\frac{2-\theta^2-\theta}{2-\theta^2+\theta}v^{m-1}\right\|\notag\\
&\leq \frac{4}{2-\theta^2+\theta}\|\hat{v}^{m+1}\|+\frac{2\theta^2}{2-\theta^2+\theta}\left(2\sum\limits_{k=1}^{m}\|\hat{v}^k\|+\|v^0\|\right)+\frac{2-\theta^2-\theta}{2-\theta^2+\theta}\left(2\sum\limits_{k=1}^{m-1}\|\hat{v}^k\|+\|v^0\|\right)\notag\\
&\leq \frac{4}{2-\theta^2+\theta}\|\hat{v}^{m+1}\|+\frac{4\theta^2}{2-\theta^2+\theta}\|\hat{v}^{m}\|+2\left(\frac{2+\theta^2-\theta}{2-\theta^2+\theta}\right)\sum\limits_{k=1}^{m-1}\|\hat{v}^k\|+\frac{2+\theta^2-\theta}{2-\theta^2+\theta}\|v^0\|\notag\\
&\leq 2\sum\limits_{k=1}^{m+1}\|\hat{v}^k\|+\|v^0\|.
\end{align*}
By the principle of mathematical induction, the conclusion holds. The proof is complete.
\end{proof}
\end{lemma}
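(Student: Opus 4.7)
The plan is to proceed by induction on $n$. Since the coefficient of $v^n$ in the definition of $\hat v^n$ equals $(2+\theta-\theta^2)/4$, which is strictly positive for $\theta\in[0,1]$, the three-term relation can be inverted to express $v^n$ (for $n\geq 2$) as
\begin{align*}
v^n = \frac{4}{2+\theta-\theta^2}\hat{v}^n - \frac{2\theta^2}{2+\theta-\theta^2}v^{n-1} - \frac{2-\theta-\theta^2}{2+\theta-\theta^2}v^{n-2}.
\end{align*}
For $n=1$ the identity $\hat v^1=(v^1+v^0)/2$ gives directly $v^1=2\hat v^1-v^0$. Applying the triangle inequality to either representation and recursively bounding the lower-order terms $v^{n-1}, v^{n-2}$ by the inductive hypothesis should produce the target estimate, provided one shows that the resulting coefficient of each $\|\hat v^k\|$ does not exceed $2$ and that the residual coefficient of $\|v^0\|$ does not exceed $1$.

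Concretely, I would first dispatch the base case $n=1$ from the triangle inequality applied to $v^1=2\hat v^1-v^0$, and then the base case $n=2$ from the inverted formula combined with the $n=1$ estimate. For the inductive step from indices $\leq m$ to $m+1$ (with $m\geq 2$), I would apply the triangle inequality to the inverted formula for $v^{m+1}$, substitute the inductive bounds for $\|v^m\|$ and $\|v^{m-1}\|$, and finally regroup to read off the total coefficient attached to $\|\hat v^{m+1}\|$, to $\|\hat v^m\|$, to the interior terms $\|\hat v^k\|$ with $1\le k\le m-1$, and to $\|v^0\|$.

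The only point where the argument actually uses structure is the verification of the elementary coefficient inequalities
\begin{align*}
\frac{4}{2+\theta-\theta^2}\leq 2,\qquad \frac{4\theta^2}{2+\theta-\theta^2}\leq 2,\qquad \frac{2+\theta^2-\theta}{2+\theta-\theta^2}\leq 1,
\end{align*}
together with the combined bound $\tfrac{4\theta^2+2(2-\theta-\theta^2)}{2+\theta-\theta^2}\leq 2$ for the interior indices. Each rearranges to a simple polynomial inequality on $[0,1]$, ultimately traceable to $\theta(1-\theta)\geq 0$ (for instance, the second amounts to $3\theta^2-\theta\leq 2$, which follows from $\theta^2\leq\theta$ on $[0,1]$). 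The main obstacle, insofar as there is one, is the bookkeeping in the inductive step: one must carefully sum the contributions from the $\|v^m\|$ and $\|v^{m-1}\|$ bounds to each individual $\|\hat v^k\|$ so that no coefficient exceeds the budget of $2$. Once this accounting is set up, the induction closes immediately and the desired inequality follows.
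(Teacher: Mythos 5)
Your proposal is correct and follows essentially the same route as the paper: induction with base cases $n=1,2$, inversion of the three-term relation defining $\hat v^n$, the triangle inequality, and the same elementary coefficient bounds reducible to $\theta(1-\theta)\geq 0$ (including the combined interior-coefficient check $\tfrac{4\theta^2+2(2-\theta-\theta^2)}{2+\theta-\theta^2}=2\cdot\tfrac{2+\theta^2-\theta}{2+\theta-\theta^2}\leq 2$, which is exactly the grouping the paper uses). The only difference is that yours is a plan rather than a fully executed computation, but every step you identify matches the paper's argument and closes correctly.
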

\begin{lemma}\label{lemma3}
\text{(\cite{HeywoodRannacher1990})} Let \(a \geq 0\), \(b > 0\), and let \(\{ \eta_i \}_{i=0}^N\) and \(\{ \xi_i \}_{i=0}^N\) be two sequences of non-negative real numbers. Suppose that for all \(0 \leq n \leq N\), the following inequality holds:
\begin{align*}
\eta_n + \tau \sum_{i=0}^n \xi_i \leq a + b\tau \sum_{i=0}^n \eta_i.
\end{align*}
Then, if \(\tau \leq \frac{1}{2b}\), the stronger bound $$\eta_n + \tau \sum_{i=0}^n \xi_i \leq a \exp(2b(n+1)\tau)$$ is valid for all \(0 \leq n \leq N\).
\end{lemma}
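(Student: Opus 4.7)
The plan is to reduce this classical discrete Gronwall inequality of Heywood--Rannacher to the most elementary scalar Gronwall lemma, namely ``if $u_n \leq \alpha + \beta\sum_{i=0}^{n-1} u_i$ for all $n$, then $u_n \leq \alpha e^{n\beta}$'', whose proof is a one-line induction via the geometric-sum bound $\sum_{i=0}^{n-1} e^{i\beta} = (e^{n\beta}-1)/(e^\beta-1) \leq (e^{n\beta}-1)/\beta$.

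The first step is to work with the combined quantity $E_n := \eta_n + \tau \sum_{i=0}^n \xi_i$ rather than with $\eta_n$ alone. Since every $\xi_j \geq 0$ one has $\eta_i \leq E_i$, so the hypothesis immediately weakens to
\begin{align*}
E_n \leq a + b\tau \sum_{i=0}^n \eta_i \leq a + b\tau \sum_{i=0}^n E_i.
\end{align*}
Peeling off the $i=n$ term from the right-hand sum and moving it to the left yields $(1-b\tau)E_n \leq a + b\tau \sum_{i=0}^{n-1} E_i$. The assumption $\tau \leq 1/(2b)$ ensures $1-b\tau \geq 1/2 > 0$, so division is legitimate and produces a recurrence of the standard form $E_n \leq \alpha + \beta \sum_{i=0}^{n-1} E_i$ with $\alpha := a/(1-b\tau)$ and $\beta := b\tau/(1-b\tau)$.

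Applying the elementary scalar Gronwall lemma then gives $E_n \leq \alpha\, e^{n\beta}$. To translate back to the constants $a$ and $b$ appearing in the statement, I would use two elementary inequalities, both valid for $b\tau \leq 1/2$: first, $(1-b\tau)^{-1} \leq 1+2b\tau \leq e^{2b\tau}$ (the left inequality being equivalent to $2(b\tau)^2 \leq b\tau$), and second, $\beta \leq 2b\tau$. These combine to give
\begin{align*}
E_n \leq a\,e^{2b\tau}\cdot e^{2bn\tau} = a\,\exp\!\bigl(2b(n+1)\tau\bigr),
\end{align*}
which is exactly the asserted bound. I do not expect any substantive obstacle: the only conceptual step worth noting is the observation at the outset that the same recurrence can be run on the full quantity $E_n$ rather than merely on $\eta_n$, which is precisely what $\eta_i \leq E_i$ permits; everything else is routine calculus.
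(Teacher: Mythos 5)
The paper offers no proof of this lemma --- it is quoted verbatim from Heywood and Rannacher --- so there is nothing internal to compare against; your argument is a correct, complete, self-contained proof of the cited result. Every step checks out: $\eta_i \leq E_i$ follows from $\xi_j \geq 0$, the condition $b\tau \leq \tfrac12$ legitimizes absorbing the $i=n$ term, the induction closes via $\sum_{i=0}^{n-1} e^{i\beta} \leq (e^{n\beta}-1)/\beta$, and the final constants are recovered from $(1-b\tau)^{-1} \leq 1+2b\tau \leq e^{2b\tau}$ together with $\beta = b\tau/(1-b\tau) \leq 2b\tau$, which is precisely where the factor $(n+1)$ in the exponent comes from; this is essentially the standard proof of the standard result.
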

\section{Theoretical analysis of the fully discrete numerical scheme}\label{section3}
This section is devoted to the theoretical analysis of the proposed numerical algorithm, covering its stability, the infinity-norm boundedness of the numerical solutions, and unconditionally optimal error estimates in the $L^2$ and $H^1$-norms.
\subsection{Stability of the fully discrete DLN scheme}
\begin{theorem}
Let $u_h^n$ and $v_h^n$ be the solutions of the fully discrete numerical algorithm  \eqref{12281754}-\eqref{12281757}, when \(\tau\) is sufficiently small, we have
\begin{align}
\|u_h^n\|+\|v_h^n\|\leq C(\|u_h^0\|+\|v_h^0\|), \quad 1\leq n\leq N.\notag 
\end{align}
\end{theorem}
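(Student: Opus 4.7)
The plan is a standard DLN energy argument that leverages Lemma \ref{lemma1} to produce a telescoping quantity and a discrete Gronwall argument via Lemma \ref{lemma3}. First I would take the test functions $\phi_h = \hat{u}_h^n$ in \eqref{12281754} and $\psi_h = \hat{v}_h^n$ in \eqref{12281755}, then extract the real part of each equation. The key structural observation is that the leading nonlinear terms are \emph{dissipative}: because $\kappa_1,\mu_1>0$ and the functions $f_1,g_1$ are non-negative,
\begin{align*}
\mathrm{Re}\left((\kappa_1+\mathrm{i}\beta_1)(f_1(|\tilde{u}_h^n|^2)\hat{u}_h^n,\hat{u}_h^n)\right)
=\kappa_1\int_{\Omega}f_1(|\tilde{u}_h^n|^2)|\hat{u}_h^n|^2\,\mathrm{d}\boldsymbol{x}\geq 0,
\end{align*}
and similarly for the $g_1$ term and for the $v$-equation. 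The viscous term contributes $\nu_1\|\nabla \hat{u}_h^n\|^2\geq 0$, which is also sign-definite. Only the linear reaction term $-\gamma_1(\hat{u}_h^n,\hat{u}_h^n)$ fails to have a favorable sign, and it is harmless because it scales like $\|\hat{u}_h^n\|^2$.

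Next I would apply Lemma \ref{lemma1} to the real part of the time-difference term, obtaining
\begin{align*}
\mathrm{Re}(D_\tau u_h^n,\hat{u}_h^n)=\frac{1}{4\tau}(G_u^n-G_u^{n-1})+\frac{\theta(1-\theta^2)}{8\tau}\|u_h^n-2u_h^{n-1}+u_h^{n-2}\|^2,
\end{align*}
with $G_u^n=(1+\theta)\|u_h^n\|^2+(1-\theta)\|u_h^{n-1}\|^2$, and analogously for $v_h^n$. Dropping every non-negative contribution on the left and adding the two equations yields, for $2\leq n\leq N$,
\begin{align*}
(G_u^n-G_u^{n-1})+(G_v^n-G_v^{n-1})\leq 4\tau|\gamma_1|\,\|\hat{u}_h^n\|^2+4\tau|\gamma_2|\,\|\hat{v}_h^n\|^2.
\end{align*}
Since $\hat{u}_h^n$ is a convex combination of $u_h^n,u_h^{n-1},u_h^{n-2}$ (the coefficients $2+\theta-\theta^2,2\theta^2,2-\theta-\theta^2$ are non-negative and sum to $4$), the triangle inequality gives $\|\hat{u}_h^n\|^2\leq C(\|u_h^n\|^2+\|u_h^{n-1}\|^2+\|u_h^{n-2}\|^2)\leq C(G_u^n+G_u^{n-1})$, and similarly for $\hat{v}_h^n$. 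Summing the telescoping inequality from $n=2$ to $n=m$ then produces an estimate of the form
\begin{align*}
G_u^m+G_v^m\leq (G_u^1+G_v^1)+C\tau\sum_{n=1}^{m}(G_u^n+G_v^n),
\end{align*}
so that Lemma \ref{lemma3} (with $\tau$ sufficiently small so that $C\tau\leq\tfrac{1}{2}$) yields $G_u^m+G_v^m\leq C(G_u^1+G_v^1)$ uniformly in $m$, from which $\|u_h^m\|^2+\|v_h^m\|^2\leq G_u^m+G_v^m$ gives the desired bound in terms of $\|u_h^0\|,\|u_h^1\|,\|v_h^0\|,\|v_h^1\|$.

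It remains to control the first-step quantities $\|u_h^1\|$ and $\|v_h^1\|$ purely in terms of $\|u_h^0\|$ and $\|v_h^0\|$. For this I would perform the analogous energy estimate on the Crank--Nicolson start-up system \eqref{12281756}--\eqref{12281757}, testing with $\hat{u}_h^1$ and $\hat{v}_h^1$, taking real parts, and using that $\mathrm{Re}(D_\tau u_h^1,\hat{u}_h^1)=\frac{1}{2\tau}(\|u_h^1\|^2-\|u_h^0\|^2)$; the dissipative terms are again non-negative (note that $f_1(|u^{1/2}|^2),g_1(|v^{1/2}|^2)\geq 0$ pointwise), and a single application of Gronwall's inequality yields $\|u_h^1\|^2+\|v_h^1\|^2\leq C(\|u_h^0\|^2+\|v_h^0\|^2)$. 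I expect the main obstacle, though minor, to be bookkeeping the coefficient $1-\theta$ in $G_u^n$ when $\theta$ is close to $1$: the bound $G_u^n\geq\|u_h^n\|^2$ from Lemma \ref{lemma1} makes this transparent, but one has to be attentive so that the induction variable $n-2$ does not destroy the Gronwall structure, which is why the convex-combination bound on $\hat{u}_h^n$ and a shift of index in the summation are needed.
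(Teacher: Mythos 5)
Your proposal is correct and follows essentially the same route as the paper: test with $\hat{u}_h^n,\hat{v}_h^n$, take real parts, exploit the non-negativity of the viscous and nonlinear terms together with Lemma \ref{lemma1}, bound $\|\hat{u}_h^n\|^2$ by neighbouring norms, sum and apply the discrete Gronwall inequality of Lemma \ref{lemma3}, with a separate Crank--Nicolson energy estimate for the first step. The only cosmetic difference is that the paper closes the $n=1$ estimate by cancelling the common factor $\tfrac{1}{2}(\|u_h^1\|+\|u_h^0\|)$ to get an explicit amplification constant rather than invoking Gronwall, and runs the Gronwall sum over $\|u_h^k\|^2$ rather than over the $G$-functionals.
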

\begin{proof}
First, we prove that the conclusion holds for \(n=1\). In fact, by taking \(\phi_h=\hat{u}_h^1\) and \(\psi_h=\hat{v}_h^1\) in \eqref{12281756} and \eqref{12281757}, respectively, taking the real part on both sides of the equations, and noting the following basic facts:
\begin{align}
&\text{Re}(D_\tau u_h^1,\hat{u}_h^1)=\frac{1}{2\tau}(\|u_h^1\|^2-\|u_h^0\|^2),\quad \text{Re}(D_\tau v_h^1,\hat{v}^1_h)=\frac{1}{2\tau}(\|v_h^1\|^2-\|v_h^0\|^2),\notag\\
&\text{Re}\left[(\kappa_1 + \mathrm{i}\beta_1)(f_1(\vert u^\frac{1}{2} \vert^2)\hat{u}_h^1,\hat{u}_h^1)+(\mu_1 + \mathrm{i}\delta_1)(g_1(\vert v^\frac{1}{2} \vert^2)\hat{u}_h^1,\hat{u}_h^1)\right]\geq0,\notag\\
&\text{Re}\left[(\kappa_2 + \mathrm{i}\beta_2)(f_2(\vert u^\frac{1}{2} \vert^2)\hat{v}_h^1,\hat{v}_h^1)+(\mu_2 + \mathrm{i}\delta_2)(g_2(\vert v^\frac{1}{2} \vert^2)\hat{v}_h^1,\hat{v}_h^1)\right]\geq0,\notag\\
&\text{Re}\left[(\nu_1 + \mathrm{i}\alpha_1)(\nabla \hat{u}_h^1,\nabla\hat{u}_h^1)\right]\geq0,\quad \text{Re}\left[(\nu_2 + \mathrm{i}\alpha_2)(\nabla \hat{v}_h^1,\nabla\hat{v}_h^1)\right]\geq0,\notag
\end{align}
we can immediately obtain 
\begin{align}
&\frac{1}{2\tau}(\|u_h^1\|^2-\|u_h^0\|^2)\leq |\gamma_1|\|\hat{u}_h^1\|^2\leq |\gamma_1|\left(\frac{\|u_h^1\|+\|u_h^0\|}{2}\right)^2, \notag\\
&\frac{1}{2\tau}(\|v_h^1\|^2-\|v_h^0\|^2)\leq |\gamma_2|\|\hat{v}_h^1\|^2\leq |\gamma_2|\left(\frac{\|v_h^1\|+\|v_h^0\|}{2}\right)^2.\notag
\end{align}
Canceling $\frac{\|u_h^1\|+\|u_h^0\|}{2}$ and $\frac{\|v_h^1\|+\|v_h^0\|}{2}$ from both sides of the corresponding equations, respectively, we get 
\begin{align}
\|u_h^1\|\leq \frac{1+\frac{|\gamma_1|}{2}\tau}{1-\frac{|\gamma_1|}{2}\tau}\|u_h^0\|\leq C\|u_h^0\|,\quad \|v_h^1\|\leq \frac{1+\frac{|\gamma_2|}{2}\tau}{1-\frac{|\gamma_2|}{2}\tau}\|v_h^0\|\leq C\|v_h^0\|.\label{202601072243}
\end{align}
Next, we prove the general case. By choosing the test functions $\phi_h=\hat{u}_h^n,\psi_h=\hat{v}_h^n$ in \eqref{12281754} and \eqref{12281755} respectively,  we have 
\begin{align}
	&(D_\tau u_h^n,\hat{u}_h^n)+(\nu_1 + \mathrm{i}\alpha_1)(\nabla \hat{u}_h^n,\nabla\hat{u}_h^n) + (\kappa_1 + \mathrm{i}\beta_1)(f_1(\vert \tilde{u}^n_h \vert^2)\hat{u}_h^n,\hat{u}_h^n)\notag\\
	&\quad + (\mu_1 + \mathrm{i}\delta_1)(g_1(\vert \tilde{v}^n_h \vert^2)\hat{u}_h^n,\hat{u}_h^n) - \gamma_1 \|\hat{u}_h^n\|^2 = 0,\quad 2\leq n\leq N,\label{20251229950}\\
	&(D_\tau v_h^n,\hat{v}_h^n) + (\nu_2 + \mathrm{i}\alpha_2)(\nabla \hat{v}_h^n,\nabla\hat{v}_h^n) + (\kappa_2 + \mathrm{i}\beta_2)(f_2(\vert \tilde{u}_h^n \vert^2)\hat{v}_h^n,\hat{v}_h^n)\notag\\
	&\quad + (\mu_2 + \mathrm{i}\delta_2)(g_2(\vert \tilde{v}_h^n  \vert^2)\hat{v}_h^n,\hat{v}_h^n) - \gamma_2 \|\hat{v}_h^n\|^2 = 0,\quad 2\leq n\leq N.\label{20251229951}
\end{align}
Define the following notations: 
\begin{align*}
F^n= (1+\theta)\|u^n_h\|^2+(1-\theta)\|u^{n-1}_h\|^2,\quad G^n= (1+\theta)\|v^n_h\|^2+(1-\theta)\|v^{n-1}_h\|^2,\quad 1\leq n\leq N.
\end{align*}
Taking the real part of both sides of \eqref{20251229950} and \eqref{20251229951}, respectively, it follows that
\begin{align}
&\frac{1}{4\tau}(F^n-F^{n-1})\leq |\gamma_1|\|\hat{u}_h^n\|^2\leq C(\|u_h^n\|^2+\|u_h^{n-1}\|+\|u_h^{n-2}\|^2),\label{202512291022}\\
&\frac{1}{4\tau}(G^n-G^{n-1})\leq |\gamma_2|\|\hat{v}_h^n\|^2\leq C(\|v_h^n\|^2+\|v_h^{n-1}\|+\|v_h^{n-2}\|^2),\label{202512291021}
\end{align}
where we also make use of the following basic fact given in Lemma \ref{lemma1}:
\begin{align*}
\text{Re}(D_\tau u_h^n,\hat{u}_h^n)\geq \frac{1}{4\tau}(F^n-F^{n-1}),\quad \text{Re}(D_\tau v_h^n,\hat{v}_h^n)\geq \frac{1}{4\tau}(G^n-G^{n-1}),
\end{align*}
and the non-negativity of the other relevant terms, i.e.,
\begin{align}
&\text{Re}\left[(\kappa_1 + \mathrm{i}\beta_1)(f_1(\vert \tilde{u}^n_h \vert^2)\hat{u}_h^n,\hat{u}_h^n)+(\mu_1 + \mathrm{i}\delta_1)(g_1(\vert \tilde{v}^n_h \vert^2)\hat{u}_h^n,\hat{u}_h^n)\right]\geq0,\notag\\
&\text{Re}\left[(\kappa_2 + \mathrm{i}\beta_2)(f_2(\vert \tilde{u}^n_h \vert^2)\hat{v}_h^n,\hat{v}_h^n)+(\mu_2 + \mathrm{i}\delta_2)(g_2(\vert \tilde{v}^n_h \vert^2)\hat{v}_h^n,\hat{v}_h^n)\right]\geq0,\notag\\
&\text{Re}\left[(\nu_1 + \mathrm{i}\alpha_1)(\nabla \hat{u}_h^n,\nabla\hat{u}_h^n)\right]\geq0,\quad \text{Re}\left[(\nu_2 + \mathrm{i}\alpha_2)(\nabla \hat{v}_h^n,\nabla\hat{v}_h^n)\right]\geq0.\notag
\end{align}
By summing the two sides of \eqref{202512291022} and \eqref{202512291021} separately and noting Lemma \ref{lemma1}, we derive
\begin{align}
&\|u_h^n\|^2\leq F^n\leq F^1+C\tau\sum\limits_{k=0}^n\|u_h^k\|^2\leq C(\|u_h^1\|^2+\|u_h^0\|^2)+C\tau\sum\limits_{k=0}^n\|u_h^k\|^2, \quad 2\leq n\leq N,\notag\\
&\|v_h^n\|^2\leq G^n\leq G^1+C\tau\sum\limits_{k=0}^n\|v_h^k\|^2\leq C(\|v_h^1\|^2+\|v_h^0\|^2)+C\tau\sum\limits_{k=0}^n\|v_h^k\|^2,\quad 2\leq n\leq N.\notag 
\end{align}
Obviously, the above two inequalities also hold for $n=0$ and $1$, and thus by Gronwall's inequality presented in Lemma \ref{lemma3}, we have
\begin{align}
\|u_h^n\|\leq C(\|u_h^1\|+\|u_h^0\|)\leq C\|u_h^0\|,\quad \|v_h^n\|\leq C(\|v_h^1\|+\|v_h^0\|)\leq C\|v_h^0\|,\quad 2\leq n\leq N.\label{202512291039}
\end{align}
Combining \eqref{202601072243} and \eqref{202512291039}, we conclude that the theorem holds true.
\end{proof}
\subsection{Boundedness of the numerical solutions in $L^{\infty}$-norm}
In this subsection, the $L^{\infty}$-boundedness of the numerical solutions is established via mathematical induction. Next, prior to establishing the estimate for the infinity norm, we first present two essential lemmas, namely Lemma \ref{lemma4} and Lemma \ref{lemma3.2}. For this purpose and subsequent convergence analysis, we introduce the following notations:
\begin{align*}
u(t_n)-u_h^n = u(t_n)-R_hu(t_n)+R_hu(t_n)-u_h^n=\xi_u^n+\eta_u^n, \quad 0 \leq n\leq N, \\
v(t_n)-v_h^n = v(t_n)-R_hv(t_n)+R_hv(t_n)-v_h^n=\xi_v^n+\eta_v^n, \quad 0 \leq n\leq N. 
\end{align*}
\begin{lemma}\label{lemma4}
Let \(u_h^1\) and \(v_h^1\) be the solutions corresponding to the fully discrete numerical algorithm in \eqref{12281756}-\eqref{12281757}, when the time step \(\tau\) is taken to be sufficiently small, we obtain
\begin{align}
\|\nabla \eta_u^1\|+\|\nabla \eta_v^1\|+\tau\left(\|\Delta_h \eta_u^1\|+\|\Delta_h \eta_v^1\|\right)\leq C(\tau^2+h^{k+1}).
\end{align}
\end{lemma}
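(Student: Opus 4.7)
The plan is to derive an error equation for $\eta_u^1$ (and, symmetrically, for $\eta_v^1$) by subtracting the first-step discrete scheme \eqref{12281756} from the weak form \eqref{202512291102} evaluated at $t=t_{\frac{1}{2}}$, and then test with $\phi_h=-\Delta_h\hat{\eta}_u^1$ so that $\|\nabla\eta_u^1\|$ and $\|\Delta_h\hat{\eta}_u^1\|$ appear on the left-hand side simultaneously. Since $\eta_u^0=0$ (as $u_h^0=R_hu^0$), the identities $\|\nabla\hat{\eta}_u^1\|=\tfrac{1}{2}\|\nabla\eta_u^1\|$ and $\|\Delta_h\hat{\eta}_u^1\|=\tfrac{1}{2}\|\Delta_h\eta_u^1\|$ will turn the output back into the quantities in the conclusion.

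First, I would split $u(t_{\frac{1}{2}})-\hat{u}_h^1=[u(t_{\frac{1}{2}})-\hat{u}^1]+\hat{\xi}_u^1+\hat{\eta}_u^1$, apply an analogous decomposition to the discrete time derivative, and write each nonlinear difference $f_1(|u(t_{\frac{1}{2}})|^2)u(t_{\frac{1}{2}})-f_1(|u^{1/2}|^2)\hat{u}_h^1$ as a Taylor-defect piece plus $f_1(|u^{1/2}|^2)\hat{\eta}_u^1$ (the latter stays on the left). Using the defining property of $R_h$ to annihilate $(\nabla\hat{\xi}_u^1,\nabla\phi_h)$, and an integration by parts to move $\nabla[u(t_{\frac{1}{2}})-\hat{u}^1]$ onto $\phi_h$ as a Laplacian in $L^2$, one arrives at
\begin{equation*}
(D_\tau\eta_u^1,\phi_h)+(\nu_1+\mathrm{i}\alpha_1)(\nabla\hat{\eta}_u^1,\nabla\phi_h)+(\kappa_1+\mathrm{i}\beta_1)(f_1(|u^{1/2}|^2)\hat{\eta}_u^1,\phi_h)+(\mu_1+\mathrm{i}\delta_1)(g_1(|v^{1/2}|^2)\hat{\eta}_u^1,\phi_h)-\gamma_1(\hat{\eta}_u^1,\phi_h)=(R_1^u,\phi_h),
\end{equation*}
where $R_1^u$ collects the midpoint truncation $u_t(t_{\frac{1}{2}})-D_\tau u^1=O(\tau^2)$, the averaging errors $u(t_{\frac{1}{2}})-\hat{u}^1=O(\tau^2)$ and $\Delta(u(t_{\frac{1}{2}})-\hat{u}^1)=O(\tau^2)$, the $O(\tau^2)$ linearization defect produced by combining $C^1$-smoothness of $f_1$ and $g_1$ with $u^{1/2}-u(t_{\frac{1}{2}})=O(\tau^2)$, and the $O(h^{k+1})$ Ritz residuals $D_\tau\xi_u^1$ and $\hat{\xi}_u^1$. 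Standard bounds then yield $\|R_1^u\|\leq C(\tau^2+h^{k+1})$, and similarly $\|R_1^v\|\leq C(\tau^2+h^{k+1})$.

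Next, I would test with $\phi_h=-\Delta_h\hat{\eta}_u^1$. Using $-(\Delta_h a,b)=(\nabla a,\nabla b)$ on $V_h^k$ and $\eta_u^0=0$, the real parts of the time-derivative, diffusion and reaction terms assemble into
\begin{equation*}
\frac{1}{2\tau}\|\nabla\eta_u^1\|^2+\nu_1\|\Delta_h\hat{\eta}_u^1\|^2-\frac{\gamma_1}{4}\|\nabla\eta_u^1\|^2.
\end{equation*}
The main obstacle is that the nonlinear contributions $(\kappa_1+\mathrm{i}\beta_1)(f_1(|u^{1/2}|^2)\hat{\eta}_u^1,-\Delta_h\hat{\eta}_u^1)$ and its $g_1$-counterpart are \emph{not} sign-definite, in contrast to the stability proof where testing with $\hat{u}_h^1$ produces manifestly non-negative quantities. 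The plan for this step is to exploit the $L^\infty$-boundedness of $f_1(|u^{1/2}|^2)$ and $g_1(|v^{1/2}|^2)$ (both are continuous functions of the explicit smooth Taylor polynomials $u^{1/2},v^{1/2}$), bound them by Cauchy--Schwarz as $C\|\hat{\eta}_u^1\|\|\Delta_h\hat{\eta}_u^1\|$, and then split via Young's inequality so that a small multiple of $\|\Delta_h\hat{\eta}_u^1\|^2$ is absorbed into $\nu_1\|\Delta_h\hat{\eta}_u^1\|^2$ while the leftover $\|\hat{\eta}_u^1\|^2$ is controlled by $C\|\nabla\eta_u^1\|^2$ through the Poincar\'e inequality. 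The residual inner product $(R_1^u,-\Delta_h\hat{\eta}_u^1)$ is handled by the same Young splitting.

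After these absorptions, and taking $\tau$ small enough to swallow the leftover $C\|\nabla\eta_u^1\|^2$ into $\tfrac{1}{2\tau}\|\nabla\eta_u^1\|^2$, one arrives at $\tfrac{1}{4\tau}\|\nabla\eta_u^1\|^2+\tfrac{\nu_1}{2}\|\Delta_h\hat{\eta}_u^1\|^2\leq C\|R_1^u\|^2$, which delivers $\|\nabla\eta_u^1\|\leq C\sqrt{\tau}(\tau^2+h^{k+1})\leq C(\tau^2+h^{k+1})$ and $\|\Delta_h\hat{\eta}_u^1\|\leq C(\tau^2+h^{k+1})$. Since $\|\Delta_h\eta_u^1\|=2\|\Delta_h\hat{\eta}_u^1\|$, one then has $\tau\|\Delta_h\eta_u^1\|\leq C\tau(\tau^2+h^{k+1})\leq C(\tau^2+h^{k+1})$. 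Repeating the same argument verbatim for $\eta_v^1$ and summing the two sets of inequalities produces the claimed estimate.
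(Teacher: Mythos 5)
Your proposal is correct and follows essentially the same route as the paper: derive the error equation via the Ritz projection, test with $\phi_h=-\Delta_h\hat{\eta}_u^1$ (resp. $-\Delta_h\hat{\eta}_v^1$), bound the non-sign-definite nonlinear and residual terms by Cauchy--Schwarz and Young's inequality so that $\|\Delta_h\hat{\eta}\|^2$ is absorbed into the diffusion term, and absorb the leftover $\|\nabla\hat{\eta}\|^2$ for small $\tau$, using $\eta_u^0=\eta_v^0=0$ to convert $\hat{\eta}^1$ back to $\eta^1$. The only differences are cosmetic bookkeeping (you keep the $f_1(|u^{1/2}|^2)\hat{\eta}_u^1$ piece on the left, while the paper estimates the full nonlinear difference on the right), and your final inequality even retains an extra factor of $\sqrt{\tau}$ in the gradient bound, which is harmlessly stronger than what is claimed.
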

\begin{proof}
From the variational formulation \eqref{12281756}-\eqref{12281757} of the equations, it follows that the exact solutions satisfy
\begin{align}
	&(D_\tau u^1,\phi_h)+(\nu_1 + \mathrm{i}\alpha_1)(\nabla \hat{u}^1,\nabla\phi_h) + (\kappa_1 + \mathrm{i}\beta_1)(f_1(\vert u^\frac{1}{2} \vert^2)\hat{u}^1,\phi_h)\notag\\
	&\quad + (\mu_1 + \mathrm{i}\delta_1)(g_1(\vert v^\frac{1}{2} \vert^2)\hat{u}^1,\phi_h) - \gamma_1 (\hat{u}^1,\phi_h) = (R^1_u,\phi_h), \quad \forall \phi_h \in V_h^{k},\label{202512292054}\\
	&(D_\tau v^1,\psi_h) + (\nu_2 + \mathrm{i}\alpha_2)(\nabla \hat{v}^1,\nabla\psi_h) + (\kappa_2 + \mathrm{i}\beta_2)(f_2(\vert u^\frac{1}{2} \vert^2)\hat{v}^1,\psi_h)\notag\\
	&\quad + (\mu_2 + \mathrm{i}\delta_2)(g_2(\vert v^\frac{1}{2}  \vert^2)\hat{v}^1,\psi_h) - \gamma_2 (\hat{v}^1,\psi_h) = (R^1_v,\psi_h),\quad \forall \psi_h\in V_h^{k},\label{202512292055}
\end{align}
where $R^1_u$ and $R^1_v$ are defined below 
\begin{align}
R^1_u &= \left(D_\tau u^1-u(t_{\frac{1}{2}})\right)-(\nu_1 + \mathrm{i}\alpha_1)\left(\Delta \hat{u}^1-\Delta u(t_{\frac{1}{2}})\right)+(\kappa_1 + \mathrm{i}\beta_1)\left(f_1(\vert u^\frac{1}{2} \vert^2)\hat{u}^1-f_1(\vert u(t_{\frac{1}{2}}) \vert^2)u(t_{\frac{1}{2}})\right)\notag\\
&\quad+(\mu_1 + \mathrm{i}\delta_1)\left(g_1(\vert v^\frac{1}{2} \vert^2)\hat{u}^1-g_1(\vert v(t_{\frac{1}{2}}) \vert^2)u(t_{\frac{1}{2}})\right)-\gamma_1 \left(\hat{u}^1-u(t_{\frac{1}{2}})\right),\notag\\
R^1_v &= \left(D_\tau v^1-v(t_{\frac{1}{2}})\right)-(\nu_2 + \mathrm{i}\alpha_2)\left(\Delta \hat{v}^1-\Delta v(t_{\frac{1}{2}})\right)+(\kappa_2 + \mathrm{i}\beta_2)\left(f_2(\vert u^\frac{1}{2} \vert^2)\hat{v}^1-f_2(\vert u(t_{\frac{1}{2}}) \vert^2)v(t_{\frac{1}{2}})\right)\notag\\
&\quad+(\mu_2 + \mathrm{i}\delta_2)\left(g_2(\vert v^\frac{1}{2} \vert^2)\hat{v}^1-g_2(\vert v(t_{\frac{1}{2}}) \vert^2)v(t_{\frac{1}{2}})\right)-\gamma_2 \left(\hat{v}^1-v(t_{\frac{1}{2}})\right)\notag.
\end{align}
By subtracting the equations \eqref{12281756}-\eqref{12281757} from \eqref{202512292054}-\eqref{202512292055} respectively, and invoking the definition of the Ritz projection operator, we arrive at the following results:
\begin{align}
&(D_\tau \eta_u^1,\phi_h)+(\nu_1 + \mathrm{i}\alpha_1)(\nabla \hat{\eta}_u^1,\nabla\phi_h) =- (\kappa_1 + \mathrm{i}\beta_1)\left(f_1(\vert u^\frac{1}{2} \vert^2)\hat{u}^1-f_1(\vert u^\frac{1}{2} \vert^2)\hat{u}_h^1,\phi_h\right)\notag\\
&\quad - (\mu_1 + \mathrm{i}\delta_1)\left(g_1(\vert v^\frac{1}{2} \vert^2)\hat{u}^1-g_1(\vert v^\frac{1}{2} \vert^2)\hat{u}_h^1,\phi_h\right) + \gamma_1 (\hat{\eta}_u^1,\phi_h)\notag\\
&\quad + (R^1_u,\phi_h)-(D_\tau \xi_u^1,\phi_h)+\gamma_1 (\hat{\xi}_u^1,\phi_h), \quad \forall \phi_h \in V_h^{k},\label{202512292126}\\
&(D_\tau \eta_v^1,\psi_h) + (\nu_2 + \mathrm{i}\alpha_2)(\nabla \hat{\eta}_v^1,\nabla\psi_h) =- (\kappa_2 + \mathrm{i}\beta_2)\left(f_2(\vert u^\frac{1}{2} \vert^2)\hat{v}^1-f_2(\vert u^\frac{1}{2} \vert^2)\hat{v}_h^1,\psi_h\right)\notag\\
&\quad - (\mu_2 + \mathrm{i}\delta_2)\left(g_2(\vert v^\frac{1}{2}  \vert^2)\hat{v}^1-g_2(\vert v^\frac{1}{2}  \vert^2)\hat{v}_h^1,\psi_h\right) + \gamma_2 (\hat{\eta}_v^1,\psi_h)\notag\\
&\quad + (R^1_v,\psi_h)-(D_\tau \xi_v^1,\psi_h)+\gamma_2 (\hat{\xi}_v^1,\psi_h),\quad \forall \psi_h\in V_h^{k}.\label{202512292127}
\end{align}
Choosing $\phi_h=-\Delta_h\hat{\eta}_u^1$ and $\psi_h=-\Delta_h\hat{\eta}_v^1$ in \eqref{202512292126} and \eqref{202512292127}, respectively, taking the real part of each side of the corresponding equation and making use of the following basic facts: 
\begin{align}
&\text{Re}(D_\tau \eta_u^1,-\Delta_h\hat{\eta}_u^1) =  \frac{1}{2\tau}\left(\|\nabla\eta_u^1\|^2-\|\nabla\eta_u^0\|^2\right),\quad \text{Re}(D_\tau \eta_v^1,-\Delta_h\hat{\eta}_v^1) =  \frac{1}{2\tau}\left(\|\nabla\eta_v^1\|^2-\|\nabla\eta_v^0\|^2\right),\notag\\
&(\nabla \hat{\eta}_u^1,\nabla(-\Delta_h\hat{\eta}_u^1))=\|\Delta_h\hat{\eta}_u^1\|^2,\quad (\nabla \hat{\eta}_v^1,\nabla(-\Delta_h\hat{\eta}_v^1))=\|\Delta_h\hat{\eta}_v^1\|^2,\notag
\end{align}
it holds that 
\begin{align}
&\|\nabla\eta_u^1\|^2-\|\nabla\eta_u^0\|^2+2\tau\nu_1\|\Delta_h\hat{\eta}_u^1\|^2 =-2\tau\text{Re}\left[(\kappa_1 + \mathrm{i}\beta_1)\left(f_1(\vert u^\frac{1}{2} \vert^2)\hat{u}^1-f_1(\vert u^\frac{1}{2} \vert^2)\hat{u}_h^1,-\Delta_h\hat{\eta}_u^1\right)\right]\notag\\
&\quad -2\tau\text{Re}\left[ (\mu_1 + \mathrm{i}\delta_1)\left(g_1(\vert v^\frac{1}{2} \vert^2)\hat{u}^1-g_1(\vert v^\frac{1}{2} \vert^2)\hat{u}_h^1,-\Delta_h\hat{\eta}_u^1\right)\right] + 2\tau\gamma_1  \text{Re}(\hat{\eta}_u^1,-\Delta_h\hat{\eta}_u^1)\notag\\
&\quad + 2\tau\text{Re}(R^1_u,-\Delta_h\hat{\eta}_u^1)-2\tau\text{Re}(D_\tau \xi_u^1,-\Delta_h\hat{\eta}_u^1)+2\tau\gamma_1\text{Re} (\hat{\xi}_u^1,-\Delta_h\hat{\eta}_u^1)\notag\\
\quad&\equiv A_1+A_2+A_3+A_4+A_5+A_6,\label{202512292329}\\
&\|\nabla\eta_v^1\|^2-\|\nabla\eta_v^0\|^2 + 2\tau\nu_2 \|\Delta_h\hat{\eta}_v^1\|^2 =-2\tau\text{Re}\left[ (\kappa_2 + \mathrm{i}\beta_2)\left(f_2(\vert u^\frac{1}{2} \vert^2)\hat{v}^1-f_2(\vert u^\frac{1}{2} \vert^2)\hat{v}_h^1,-\Delta_h\hat{\eta}_v^1\right)\right]\notag\\
&\quad -2\tau\text{Re}\left[ (\mu_2 + \mathrm{i}\delta_2)\left(g_2(\vert v^\frac{1}{2} \vert^2)\hat{v}^1-g_2(\vert v^\frac{1}{2}  \vert^2)\hat{v}_h^1,-\Delta_h\hat{\eta}_v^1\right)\right]+ 2\tau\gamma_2 \text{Re} (\hat{\eta}_v^1,-\Delta_h\hat{\eta}_v^1)\notag\\	
&\quad +2\tau\text{Re} (R^1_v,-\Delta_h\hat{\eta}_v^1)-2\tau\text{Re}(D_\tau \xi_v^1,-\Delta_h\hat{\eta}_v^1)+2\tau\gamma_2 \text{Re}(\hat{\xi}_v^1,-\Delta_h\hat{\eta}_v^1)\notag\\
\quad&\equiv A_7+A_8+A_9+A_{10}+A_{11}+A_{12}.\label{202512292330}
\end{align}
To complete the proof, it suffices to derive an upper bound for each term on the right-hand side of the aforementioned equations. We first focus our attention on the term \(A_1\). Specifically, by virtue of the Cauchy-Schwarz inequality and Young's inequality, we establish that
\begin{align}
A_1&= -2\tau\text{Re}\left[(\kappa_1 + \mathrm{i}\beta_1)\left(f_1(\vert u^\frac{1}{2} \vert^2)\hat{u}^1-f_1(\vert u^\frac{1}{2} \vert^2)\hat{u}_h^1,-\Delta_h\hat{\eta}_u^1\right)\right]\notag\\
&\leq \frac{1}{6}\tau\nu_1\|\Delta_h\hat{\eta}_u^1\|^2+C\tau\|\hat{u}^1-\hat{u}_h^1\|^2\notag\\
&\leq \frac{1}{6}\tau\nu_1\|\Delta_h\hat{\eta}_u^1\|^2+C\tau\|\hat{\eta}_u^1\|^2+Ch^{2k+2}\notag\\
&\leq \frac{1}{6}\tau\nu_1\|\Delta_h\hat{\eta}_u^1\|^2+C\tau\|\nabla\hat{\eta}_u^1\|^2+Ch^{2k+2}.\label{202512301716}
\end{align}
The estimate for \(A_2\) can be obtained by the same reasoning as \(A_1\), and we omit the detailed proof here for brevity
\begin{align}
A_2& = -2\tau\text{Re}\left[ (\mu_1 + \mathrm{i}\delta_1)\left(g_1(\vert v^\frac{1}{2} \vert^2)\hat{u}^1-g_1(\vert v^\frac{1}{2} \vert^2)\hat{u}_h^1,-\Delta_h\hat{\eta}_u^1\right)\right]\notag\\ 
&\leq \frac{1}{6}\tau\nu_1\|\Delta_h\hat{\eta}_u^1\|^2+C\tau\|\nabla\hat{\eta}_u^1\|^2+Ch^{2k+2}.\label{202512301717}
\end{align}
By a further application of Young's inequality and the Cauchy-Schwarz inequality,
the estimates for $A_3, A_4, A_5$ and $A_6$ follow readily
\begin{align}
A_3+A_4+A_5+A_6&=2\tau\gamma_1  \text{Re}(\hat{\eta}_u^1,-\Delta_h\hat{\eta}_u^1)
 + 2\tau\text{Re}(R^1_u,-\Delta_h\hat{\eta}_u^1)\notag\\
 &\quad -2\tau\text{Re}(D_\tau \xi_u^1,-\Delta_h\hat{\eta}_u^1)+2\tau\gamma_1\text{Re} (\hat{\xi}_u^1,-\Delta_h\hat{\eta}_u^1)\notag\\
 &\leq \frac{2}{3}\tau\nu_1\|\Delta_h\hat{\eta}_u^1\|^2 + C\tau\|\nabla\hat{\eta}_u^1\|^2+C\tau\|R^1_u\|^2+C\tau\|D_\tau \xi_u^1\|^2+C\tau\|\hat{\xi}_u^1\|^2\notag\\
 &\leq \frac{2}{3}\tau\nu_1\|\Delta_h\hat{\eta}_u^1\|^2 + C\tau\|\nabla\hat{\eta}_u^1\|^2+C\tau^4+Ch^{2k+2}.\label{202512301718}
\end{align}
Using similar techniques, we obtain the estimates for \(A_7\) to \(A_{12}\)
\begin{align}
A_7+A_8+A_9+A_{10}+A_{11}+A_{12}\leq \tau\nu_2\|\Delta_h\hat{\eta}_v^1\|^2+C\tau\|\nabla\hat{\eta}_v^1\|^2+C\tau^4+Ch^{2k+2}.\label{202512301721}
\end{align}
By substituting the results of \eqref{202512301716}-\eqref{202512301718} and \eqref{202512301721} into \eqref{202512292329} and \eqref{202512292330}, respectively, we get
\begin{align}
(1-C\tau)\|\nabla\eta_u^1\|^2+\tau\nu_1\|\Delta_h\eta_u^1\|^2\leq C\tau^4+h^{2k+2},\notag\\
(1-C\tau)\|\nabla\eta_v^1\|^2+\tau\nu_1\|\Delta_h\eta_v^1\|^2\leq C\tau^4+h^{2k+2}.\notag 
\end{align}
When \(\tau\) is sufficiently small, the conclusion of the lemma holds. This completes the proof.
\end{proof}
\begin{lemma}\label{lemma3.2}
Let \(u_h^n\) and \(v_h^n\) be the solutions of the fully discrete numerical algorithm \eqref{12281754}-\eqref{12281757}. If the time step \(\tau\) is chosen to be sufficiently small, we have
\begin{align}
\|\nabla \eta_u^n\|+\|\nabla \eta_v^n\|+\tau\left(\|\Delta_h \eta_u^n\|+\|\Delta_h \eta_v^n\|\right)\leq C(\tau^2+h^{k+1}),\quad 0\leq n\leq N.\notag
\end{align}
\end{lemma}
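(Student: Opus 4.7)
My plan is to prove the lemma by strong induction on $n$, with $n=0$ trivial (since $\eta_u^0=\eta_v^0=0$ by the choice $u_h^0=R_hu^0$, $v_h^0=R_hv^0$) and $n=1$ supplied by Lemma \ref{lemma4}. Assuming the stated bound for all indices $0,1,\dots,n-1$, I will (i) use the inductive hypothesis to establish an \emph{a priori} $L^{\infty}$-bound $\|\tilde u_h^n\|_\infty+\|\tilde v_h^n\|_\infty\le M$ with $M$ independent of $\tau$ and $h$; (ii) set up the error equation at level $n$ following exactly the derivation used in Lemma \ref{lemma4}; and (iii) test with $-\Delta_h\hat\eta_u^n$ and $-\Delta_h\hat\eta_v^n$, exploit Lemma \ref{lemma1} to produce the telescoping quantity $\tilde F^n=(1+\theta)\|\nabla\eta_u^n\|^2+(1-\theta)\|\nabla\eta_u^{n-1}\|^2$ (and its $v$-analogue), and close the estimate via the discrete Gronwall inequality in Lemma \ref{lemma3}.

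\textbf{The $L^\infty$-bound step, which is the main obstacle.} To control the nonlinear differences such as $f_1(|\tilde u^n|^2)\hat u^n-f_1(|\tilde u_h^n|^2)\hat u_h^n$ with the local Lipschitz property of the $C^1$ nonlinearities, I need a uniform $L^\infty$ bound on $\tilde u_h^n,\tilde v_h^n$. Writing $\tilde u_h^n=\tilde{R_h u}^n-\tilde\eta_u^n$, the regular part is bounded through the stability of the Ritz projection and the smoothness of $u$, so the task reduces to showing that $\|\tilde\eta_u^k\|_\infty\to 0$ for $k=n-1,n-2$. This is precisely where I would split into the two regimes highlighted in the introduction. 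When $\tau\le h$, the inverse inequality \eqref{202512302043} combined with the Poincaré inequality and the inductive hypothesis yields
\begin{equation*}
\|\eta_u^k\|_\infty\le Ch^{-d/2}\|\eta_u^k\|\le Ch^{-d/2}\|\nabla\eta_u^k\|\le Ch^{-d/2}(\tau^2+h^{k+1}),
\end{equation*}
which tends to zero since $\tau^2h^{-d/2}\le h^{2-d/2}\to 0$ for $d=2,3$ and $h^{k+1-d/2}\to 0$ for $k\ge 1$. When $\tau>h$, the inductive hypothesis supplies both $\|\nabla\eta_u^k\|\le C(\tau^2+h^{k+1})$ and $\|\Delta_h\eta_u^k\|\le C(\tau+h^{k+1}/\tau)$, and the discrete Agmon inequality \eqref{202512302053} then gives
\begin{equation*}
\|\eta_u^k\|_\infty\le C\|\nabla\eta_u^k\|^{1/2}\|\Delta_h\eta_u^k\|^{1/2}\le C\bigl[(\tau^2+h^{k+1})(\tau+h^{k+1}/\tau)\bigr]^{1/2},
\end{equation*}
and using $h<\tau$ one checks that each term on the right tends to zero. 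Thus in both regimes $\|\tilde u_h^n\|_\infty\le M$, and the same reasoning applies verbatim to $\tilde v_h^n$.

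\textbf{Error equation and closure.} With the $L^\infty$ bound in hand, I derive the level-$n$ error equations in complete analogy with \eqref{202512292126}--\eqref{202512292127}, decomposing each nonlinear difference as
\begin{equation*}
f_1(|\tilde u^n|^2)\hat u^n-f_1(|\tilde u_h^n|^2)\hat u_h^n=\bigl[f_1(|\tilde u^n|^2)-f_1(|\tilde u_h^n|^2)\bigr]\hat u^n+f_1(|\tilde u_h^n|^2)\bigl(\hat\xi_u^n+\hat\eta_u^n\bigr),
\end{equation*}
and estimating the bracket by $C(|\tilde u^n|+|\tilde u_h^n|)|\tilde\xi_u^n+\tilde\eta_u^n|\le C(|\tilde\eta_u^n|+h^{k+1}\text{-terms})$ using the regularity of $u$ and the $L^\infty$ bound on $\tilde u_h^n$. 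Testing with $-\Delta_h\hat\eta_u^n$ (resp.\ $-\Delta_h\hat\eta_v^n$), taking real parts, and applying Lemma \ref{lemma1} to $\nabla\eta_u^n$ produces
\begin{equation*}
\tfrac{1}{4\tau}(\tilde F^n-\tilde F^{n-1})+\nu_1\|\Delta_h\hat\eta_u^n\|^2\le \tfrac{\nu_1}{2}\|\Delta_h\hat\eta_u^n\|^2+C(\|\nabla\hat\eta_u^n\|^2+\|\nabla\tilde\eta_u^n\|^2)+C(\tau^4+h^{2k+2}),
\end{equation*}
with the consistency residual $R_u^n$ supplying the $C\tau^4$ contribution and the Ritz terms $D_\tau\xi_u^n,\hat\xi_u^n$ supplying $Ch^{2k+2}$. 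Summing over $n$ the analogous inequalities for both $u$ and $v$, combining them, absorbing the $\|\Delta_h\hat\eta\|^2$ half into the left-hand side, bounding $\|\nabla\hat\eta\|^2+\|\nabla\tilde\eta\|^2$ by $\nabla\eta$-norms at the three adjacent time levels, and using $\tilde F^1\le C(\tau^4+h^{2k+2})$ from Lemma \ref{lemma4}, I obtain
\begin{equation*}
\|\nabla\eta_u^n\|^2+\|\nabla\eta_v^n\|^2+\tau\!\!\sum_{k=2}^{n}\!\bigl(\|\Delta_h\hat\eta_u^k\|^2+\|\Delta_h\hat\eta_v^k\|^2\bigr)\le C\tau\!\sum_{k=0}^{n}\!\bigl(\|\nabla\eta_u^k\|^2+\|\nabla\eta_v^k\|^2\bigr)+C(\tau^4+h^{2k+2}).
\end{equation*}
Lemma \ref{lemma3} then yields $\|\nabla\eta_u^n\|+\|\nabla\eta_v^n\|\le C(\tau^2+h^{k+1})$, and feeding this back into the individual step-$n$ estimate recovers the $\tau(\|\Delta_h\eta_u^n\|+\|\Delta_h\eta_v^n\|)$ bound via the expansion of $\eta^n$ in terms of $\hat\eta^n,\eta^{n-1},\eta^{n-2}$ and the inductive control of the last two. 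This closes the induction and completes the proof.
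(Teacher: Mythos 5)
Your overall architecture coincides with the paper's: induction with base cases $n=0,1$, the two-regime ($\tau\le h$ via inverse plus Poincar\'e, $\tau>h$ via discrete Agmon) derivation of the $L^\infty$ bound from the inductive hypothesis, testing the error equations with $-\Delta_h\hat\eta_u^n$ and $-\Delta_h\hat\eta_v^n$, Lemma \ref{lemma1} for the telescoping $H^1$ quantity, and the discrete Gronwall inequality. Those parts are sound and match the paper essentially line for line.

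The one place where you diverge is the recovery of the $\tau\left(\|\Delta_h\eta_u^n\|+\|\Delta_h\eta_v^n\|\right)$ bound, and as sketched this step has a gap. After Gronwall you control $\tau\sum_{k=2}^{n}\|\Delta_h\hat\eta_u^k\|^2\le C(\tau^4+h^{2k+2})$, hence only $\tau\|\Delta_h\hat\eta_u^n\|\le C\sqrt{\tau}\,(\tau^2+h^{k+1})$ at the single level $n$. You then propose to write $\Delta_h\eta_u^n$ as a combination of $\Delta_h\hat\eta_u^n$, $\Delta_h\eta_u^{n-1}$, $\Delta_h\eta_u^{n-2}$ and invoke the inductive hypothesis on the last two. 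But the coefficients of $\eta^{n-1}$ and $\eta^{n-2}$ in that expansion sum to $\frac{2+\theta^2-\theta}{2-\theta^2+\theta}$, which equals $1$ at $\theta=0$ and $\theta=1$, so the recursion has the form $A_n\le b_n+A_{n-1}$-type with $b_n=C\sqrt{\tau}(\tau^2+h^{k+1})>0$. An induction that assumes $A_{n-1},A_{n-2}\le C_*(\tau^2+h^{k+1})$ with a fixed $C_*$ cannot conclude $A_n\le C_*(\tau^2+h^{k+1})$ with the same $C_*$; naively the constant grows by $O(\sqrt{\tau})$ per step, i.e.\ by $O(N\sqrt{\tau})=O(\tau^{-1/2})$ over the whole time interval. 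This is exactly the difficulty the paper's Lemma \ref{lemma2.2} is designed to remove: unrolling the recursion gives the uniform bound $\|\Delta_h\eta_u^n\|\le 2\sum_{k=1}^{n}\|\Delta_h\hat\eta_u^k\|+\|\Delta_h\eta_u^0\|$ with an $n$-independent constant, and then the Cauchy--Schwarz inequality in $\mathbb{R}^n$ converts $\tau\sum_k\|\Delta_h\hat\eta_u^k\|$ into $\sqrt{T}\bigl(\tau\sum_k\|\Delta_h\hat\eta_u^k\|^2\bigr)^{1/2}\le C(\tau^2+h^{k+1})$. You should replace the level-by-level feedback with this global unrolling (or prove the equivalent of Lemma \ref{lemma2.2} yourself); without it the induction on the $\Delta_h$ part does not close, and since your $\tau>h$ Agmon step consumes precisely that part of the hypothesis, the whole argument depends on it.
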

\begin{proof}
To establish the conclusion, we employ the method of mathematical induction in the following proof. When \(n = 0\), the result is clearly satisfied (as it holds trivially). The scenario corresponding to \(n = 1\) has been proven in Lemma \ref{lemma4}. Suppose that the statement holds true for all \( n \leq m-1~(m\geq 2)\), i.e., 
\begin{align}
\|\nabla \eta_u^n\|+\|\nabla \eta_v^n\|+\tau\left(\|\Delta_h \eta_u^n\|+\|\Delta_h \eta_v^n\|\right)\leq C(\tau^2+h^{k+1}),\quad 0\leq n\leq m-1.\label{202512302050}
\end{align}
Next, we prove the boundedness of $\eta_u^n$ and $\eta_v^n$ in the infinity norm by considering two separate cases. When \(\tau\leq h\), we obtain the desired result by means of the inverse inequality \eqref{202512302043}, \eqref{202512302050} and Poincaré's inequality. In fact, it arrives at 
\begin{align}
\|\eta_u^n\|_{\infty} + \|\eta_v^n\|_{\infty}&\leq Ch^{-\frac{d}{2}}(\|\eta_u^n\| + \|\eta_v^n\|)\notag\\
&\leq Ch^{-\frac{d}{2}}(\|\nabla \eta_u^n\| + \|\nabla \eta_v^n\|)\notag\\
&\leq Ch^{2-\frac{d}{2}}\leq 1,\quad 0\leq n\leq m-1. \notag 
\end{align}
For the case \(\tau > h\), by applying the discrete Agmon inequality \eqref{202512302053} and \eqref{202512302050}, we can obtain the corresponding result, i.e., 
\begin{align}
\|\eta_u^n\|_\infty+\|\eta_v^n\|_\infty&\leq C\|\nabla \eta_u^n \|^{1/2} \| \Delta_h \eta_u^n \|^{1/2}+C\|\nabla \eta_v^n \|^{1/2} \| \Delta_h \eta_v^n \|^{1/2}\notag\\
&\leq C\tau^{\frac{3}{2}}\leq 1,\quad 0\leq n\leq m-1. \notag
\end{align}
Combining the above two cases, we conclude that for \(0\leq n\leq m-1\), the following holds:
\begin{align}
\|\eta_u^n\|_\infty+\|\eta_v^n\|_\infty\leq 1, \notag
\end{align}
which further gives that
\begin{align}
\|u_h^n\|_\infty+\|v_h^n\|_\infty\leq C, \quad 0\leq n\leq m-1.\label{202512311008}
\end{align}
In what follows, we focus on proving that the conclusion of the lemma holds for $n=m$. For this purpose, simple algebraic manipulations yield the equations satisfied by the exact solutions, which are listed as follows:
\begin{align}
&(D_\tau u^n,\phi_h)+(\nu_1 + \mathrm{i}\alpha_1)(\nabla \hat{u}^n,\nabla\phi_h) + (\kappa_1 + \mathrm{i}\beta_1)(f_1(\vert \tilde{u}^n \vert^2)\hat{u}^n,\phi_h)\notag\\
&\quad + (\mu_1 + \mathrm{i}\delta_1)(g_1(\vert \tilde{v}^n \vert^2)\hat{u}^n,\phi_h) - \gamma_1 (\hat{u}^n,\phi_h) = (R^n_u,\phi_h), \quad \forall \phi_h \in V_h^{k},\quad2\leq n\leq m,\label{202512302126}\\
&(D_\tau v^n,\psi_h) + (\nu_2 + \mathrm{i}\alpha_2)(\nabla \hat{v}^n,\nabla\psi_h) + (\kappa_2 + \mathrm{i}\beta_2)(f_2(\vert \tilde{u}^n \vert^2)\hat{v}^n,\psi_h)\notag\\	&\quad + (\mu_2 + \mathrm{i}\delta_2)(g_2(\vert \tilde{v}^n  \vert^2)\hat{v}^n,\psi_h) - \gamma_2 (\hat{v}^n,\psi_h) = (R^n_v,\psi_h),\quad \forall \psi_h\in V_h^{k},\quad 2\leq n\leq m,\label{202512302127}
\end{align}
where $R^n_u$ and $R^n_v$ are defined subsequently
\begin{align}
	R^n_u &= \left(D_\tau u^n-u(t_{n-1+\frac{\theta}{2}})\right)-(\nu_1 + \mathrm{i}\alpha_1)\left(\Delta \hat{u}^n-\Delta u(t_{n-1+\frac{\theta}{2}})\right)\notag\\
	&\quad +(\kappa_1 + \mathrm{i}\beta_1)\left(f_1(\vert \tilde{u}^n \vert^2)\hat{u}^n-f_1(\vert u(t_{n-1+\frac{\theta}{2}}) \vert^2)u(t_{n-1+\frac{\theta}{2}})\right)\notag\\
	&\quad+(\mu_1 + \mathrm{i}\delta_1)\left(g_1(\vert \tilde{v}^n \vert^2)\hat{u}^n-g_1(\vert v(t_{n-1+\frac{\theta}{2}}) \vert^2)u(t_{n-1+\frac{\theta}{2}})\right)-\gamma_1 \left(\hat{u}^n-u(t_{n-1+\frac{\theta}{2}})\right),\notag\\
	R^n_v &= \left(D_\tau v^n-v(t_{n-1+\frac{\theta}{2}})\right)-(\nu_2 + \mathrm{i}\alpha_2)\left(\Delta \hat{v}^n-\Delta v(t_{n-1+\frac{\theta}{2}})\right)\notag\\
	&\quad+(\kappa_2 + \mathrm{i}\beta_2)\left(f_2(\vert \tilde{u}^n \vert^2)\hat{v}^n-f_2(\vert u(t_{n-1+\frac{\theta}{2}}) \vert^2)v(t_{n-1+\frac{\theta}{2}})\right)\notag\\
	&\quad+(\mu_2 + \mathrm{i}\delta_2)\left(g_2(\vert \tilde{v}^n \vert^2)\hat{v}^n-g_2(\vert v(t_{n-1+\frac{\theta}{2}}) \vert^2)v(t_{n-1+\frac{\theta}{2}})\right)-\gamma_2 \left(\hat{v}^n-v(t_{n-1+\frac{\theta}{2}})\right)\notag.
\end{align}
By combining \eqref{12281754}-\eqref{12281755} and \eqref{202512302126}-\eqref{202512302127}, we have
\begin{align}
&(D_\tau \eta_u^n,\phi_h)+(\nu_1 + \mathrm{i}\alpha_1)(\nabla \hat{\eta}_u^n,\nabla\phi_h) =- (\kappa_1 + \mathrm{i}\beta_1)\left(f_1(\vert \tilde{u}^n \vert^2)\hat{u}^n-f_1(\vert \tilde{u}_h^n \vert^2)\hat{u}_h^n,\phi_h\right)\notag\\
&\quad - (\mu_1 + \mathrm{i}\delta_1)\left(g_1(|\tilde{v}^n \vert^2)\hat{u}^n-g_1(\vert \tilde{v}_h^n \vert^2)\hat{u}_h^n,\phi_h\right) + \gamma_1 (\hat{\eta}_u^n,\phi_h)\notag\\
&\quad + (R^n_u,\phi_h)-(D_\tau \xi_u^n,\phi_h)+\gamma_1 (\hat{\xi}_u^n,\phi_h), \quad \forall \phi_h \in V_h^{k},\label{202512302244}\\
&(D_\tau \eta_v^n,\psi_h) + (\nu_2 + \mathrm{i}\alpha_2)(\nabla \hat{\eta}_v^n,\nabla\psi_h) =- (\kappa_2 + \mathrm{i}\beta_2)\left(f_2(\vert \tilde{u}^n \vert^2)\hat{v}^n-f_2(\vert \tilde{u}_h^n \vert^2)\hat{v}_h^n,\psi_h\right)\notag\\
&\quad - (\mu_2 + \mathrm{i}\delta_2)\left(g_2(\vert \tilde{v}^n  \vert^2)\hat{v}^n-g_2(\vert \tilde{v}^n_h  \vert^2)\hat{v}_h^n,\psi_h\right) + \gamma_2 (\hat{\eta}_v^n,\psi_h)\notag\\
&\quad + (R^n_v,\psi_h)-(D_\tau \xi_v^n,\psi_h)+\gamma_2 (\hat{\xi}_v^n,\psi_h),\quad \forall \psi_h\in V_h^{k}.\label{202512302245}
\end{align}
In \eqref{202512302244} and \eqref{202512302245}, set \(\phi_h = -\Delta_h\hat{\eta}_u^n\) and \(\psi_h = -\Delta_h\hat{\eta}_v^n\) (for each equation, respectively), then take the real part of both sides of each resulting equation, it holds that 
\begin{align}
	&F_u^n-F_u^{n-1}+4\tau\nu_1\|\Delta_h\hat{\eta}_u^n\|^2 \leq -4\tau\text{Re}\left[(\kappa_1 + \mathrm{i}\beta_1)\left(f_1(\vert \tilde{u}^n \vert^2)\hat{u}^n-f_1(\vert \tilde{u}_h^n \vert^2)\hat{u}_h^n,-\Delta_h\hat{\eta}_u^n\right)\right]\notag\\
	&\quad -4\tau\text{Re}\left[ (\mu_1 + \mathrm{i}\delta_1)\left(g_1(\vert \tilde{v}^n \vert^2)\hat{u}^n-g_1(\vert \tilde{v}_h^n \vert^2)\hat{u}_h^n,-\Delta_h\hat{\eta}_u^n\right)\right] + 4\tau\gamma_1  \text{Re}(\hat{\eta}_u^n,-\Delta_h\hat{\eta}_u^n)\notag\\
	&\quad + 4\tau\text{Re}(R^n_u,-\Delta_h\hat{\eta}_u^n)-4\tau\text{Re}(D_\tau \xi_u^n,-\Delta_h\hat{\eta}_u^n)+4\tau\gamma_1\text{Re} (\hat{\xi}_u^n,-\Delta_h\hat{\eta}_u^n)\notag\\
	\quad&\equiv A_u^1+A_u^2+A_u^3+A_u^4+A_u^5+A_u^6,\label{20251231924}\\
	&F_v^n-F_v^{n-1} + 4\tau\nu_2 \|\Delta_h\hat{\eta}_v^n\|^2 \leq -4\tau\text{Re}\left[ (\kappa_2 + \mathrm{i}\beta_2)\left(f_2(\vert \tilde{u}^n \vert^2)\hat{v}^n-f_2(\vert \tilde{u}^n_h \vert^2)\hat{v}_h^n,-\Delta_h\hat{\eta}_v^n\right)\right]\notag\\
	&\quad -4\tau\text{Re}\left[ (\mu_2 + \mathrm{i}\delta_2)\left(g_2(\vert \tilde{v}^n \vert^2)\hat{v}^n-g_2(\vert \tilde{v}^n_h  \vert^2)\hat{v}_h^n,-\Delta_h\hat{\eta}_v^n\right)\right]+ 4\tau\gamma_2 \text{Re} (\hat{\eta}_v^n,-\Delta_h\hat{\eta}_v^n)\notag\\	
	&\quad +4\tau\text{Re} (R^n_v,-\Delta_h\hat{\eta}_v^n)-4\tau\text{Re}(D_\tau \xi_v^n,-\Delta_h\hat{\eta}_v^n)+4\tau\gamma_2 \text{Re}(\hat{\xi}_v^n,-\Delta_h\hat{\eta}_v^n)\notag\\
	\quad&\equiv A_v^1+A_v^2+A_v^3+A_v^4+A_v^5+A_v^6,\label{202512311043}
\end{align}
here, we make use of the results given in Lemma \ref{lemma1}, which are presented as follows: 
\begin{align}
&(D_\tau \eta_u^n,-\Delta_h\hat{\eta}_u^n)\geq \frac{1}{4\tau}(F_u^n-F_u^{n-1}), \quad F_u^n= (1+\theta)\|\nabla \eta_u^n\|^2+(1-\theta)\|\nabla \eta_u^{n-1}\|^2\geq \|\nabla \eta_u^n\|^2,\notag\\
&(D_\tau \eta_v^n,-\Delta_h\hat{\eta}_v^n)\geq \frac{1}{4\tau}(F_v^n-F_v^{n-1}), \quad F_v^n= (1+\theta)\|\nabla \eta_v^n\|^2+(1-\theta)\|\nabla \eta_v^{n-1}\|^2\geq \|\nabla \eta_v^n\|^2.\notag
\end{align}
We now provide estimates for each term in the inequalities above. For $A_u^1$, employing the Young's inequality and Cauchy-Schwarz inequality, we arrive at 
\begin{align}
A_u^1&=-4\tau\text{Re}\left[(\kappa_1 + \mathrm{i}\beta_1)\left(f_1(\vert \tilde{u}^n \vert^2)\hat{u}^n-f_1(\vert \tilde{u}_h^n \vert^2)\hat{u}_h^n,-\Delta_h\hat{\eta}_u^n\right)\right]\notag\\
&\leq \frac{1}{2}\tau\nu_1\|\Delta_h\hat{\eta}_u^1\|^2+C\tau\|f_1(\vert \tilde{u}^n \vert^2)\hat{u}^n-f_1(\vert \tilde{u}_h^n \vert^2)\hat{u}_h^n\|^2\notag\\
&\leq \frac{1}{2}\tau\nu_1\|\Delta_h\hat{\eta}_u^1\|^2+C\tau\|f_1(\vert \tilde{u}^n \vert^2)(\hat{u}^n-\hat{u}_h^n)+\left(f_1(\vert \tilde{u}^n \vert^2)-f_1(\vert \tilde{u}^n_h \vert^2)\right)\hat{u}_h^n\|^2\notag\\
&\leq \frac{1}{2}\tau\nu_1\|\Delta_h\hat{\eta}_u^n\|^2+C\tau\|\hat{u}^n-\hat{u}_h^n\|^2+C\tau\|\left(\vert \tilde{u}^n \vert^2-\vert \tilde{u}^n_h \vert^2\right)\hat{u}_h^n\|^2\notag\\
&\leq \frac{1}{2}\tau\nu_1\|\Delta_h\hat{\eta}_u^n\|^2+C\tau\|\hat{\eta}_u^n\|^2+C\tau h^{2k+2}+C\tau\|\tilde{u}^n-\tilde{u}_h^n\|^2\notag\\
&\leq \frac{1}{2}\tau\nu_1\|\Delta_h\hat{\eta}_u^n\|^2+C\tau\|\hat{\eta}_u^n\|^2+C\tau h^{2k+2}+C\tau\|\tilde{\eta}_u^n\|^2\notag\\
&\leq \frac{1}{2}\tau\nu_1\|\Delta_h\hat{\eta}_u^n\|^2+C\tau\|\nabla\hat{\eta}_u^n\|^2+C\tau h^{2k+2}+C\tau\|\nabla\tilde{\eta}_u^n\|^2,\label{202512311042}
\end{align}
where we have used \eqref{202512311008} and the following inequalities in the argument:
\begin{align}
	&\|f_1(\vert \tilde{u}^n \vert^2)\|_{\infty}\leq C,\quad |f_1(\vert \tilde{u}^n \vert^2)-f_1(\vert \tilde{u}^n_h \vert^2)|\leq C(\vert \tilde{u}^n \vert^2-\vert \tilde{u}^n_h \vert^2),\notag\\
	&||a|^2-|b|^2|\leq |a+b||a-b|,\quad \forall a,b\in\mathbb{ C}, \quad \hat{u}_h^n=R_hu(t_n)-\eta_{u}^n.\notag
\end{align}
Following the above analysis, we obtain the estimate for \(A_u^2\)
\begin{align}
A_u^2 &= -4\tau\text{Re}\left[ (\mu_1 + \mathrm{i}\delta_1)\left(g_1(\vert \tilde{v}^n \vert^2)\hat{u}^n-g_1(\vert \tilde{v}_h^n \vert^2)\hat{u}_h^n,-\Delta_h\hat{\eta}_u^n\right)\right]\notag\\
& \leq \frac{1}{2}\tau\nu_1\|\Delta_h\hat{\eta}_u^n\|^2+C\tau\|\nabla\hat{\eta}_u^n\|^2+C\tau h^{2k+2}+C\tau\|\nabla\tilde{\eta}_v^n\|^2\label{202512311040}
\end{align}
For the remaining terms, the estimates can be obtained easily 
\begin{align}
A_u^3+A_u^4+A_u^5+A_u^6 &= 4\tau\gamma_1  \text{Re}(\hat{\eta}_u^n,-\Delta_h\hat{\eta}_u^n)+ 4\tau\text{Re}(R^n_u,-\Delta_h\hat{\eta}_u^n)\notag\\
&\quad -4\tau\text{Re}(D_\tau \xi_u^n,-\Delta_h\hat{\eta}_u^n)+4\tau\gamma_1\text{Re} (\hat{\xi}_u^n,-\Delta_h\hat{\eta}_u^n)\notag\\
&\leq 2\tau\nu_1\|\Delta_h\hat{\eta}_u^n\|^2+C\tau\|\nabla\hat{\eta}_u^n\|^2+C\tau^5+C\tau h^{2k+2}.\label{202512311039}
\end{align}
By virtue of the estimates for $u$ derived above, we can establish the corresponding estimates for $v$ as follows:
\begin{align}
A_v^1+A_v^2+A_v^3+A_v^4+A_v^5+A_v^6&\leq 3\tau\nu_2\|\Delta_h\hat{\eta}_v^n\|^2+C\tau\|\nabla\hat{\eta}_v^n\|^2+C\tau\|\nabla\tilde{\eta}_v^n\|^2\notag\\
&\quad+C\tau\|\nabla\tilde{\eta}_u^n\|^2+C\tau^5+C\tau h^{2k+2}.\label{202512311038}
\end{align}
Substituting \eqref{202512311042}-\eqref{202512311039} into \eqref{20251231924} and \eqref{202512311038} into \eqref{202512311043} respectively, followed by summing the resulting two inequalities, yields
\begin{align}
&\quad(F_u^n+F_v^{n})-(F_u^{n-1}+F_v^{n-1})+\tau\left(\nu_1\|\Delta_h\hat{\eta}_u^n\|^2+\nu_2\|\Delta_h\hat{\eta}_v^n\|^2\right)\notag\\
&\leq C\tau\|\nabla\hat{\eta}_v^n\|^2+C\tau\|\nabla\hat{\eta}_u^n\|^2+C\tau\|\nabla\tilde{\eta}_u^n\|^2+C\tau\|\nabla\tilde{\eta}_v^n\|^2+C\tau^5+C\tau h^{2k+2},\quad 2\leq n\leq m.\notag 
\end{align}
Summing both sides of the aforementioned inequality and invoking Lemma \ref{lemma4}, we arrive at
\begin{align}
&\quad (\|\nabla\eta_u^n\|^2+\|\nabla\eta_v^n\|^2)+\tau\sum\limits_{k=2}^{n}\left(\nu_1\|\Delta_h\hat{\eta}_u^k\|^2+\nu_2\|\Delta_h\hat{\eta}_v^k\|^2\right)\notag\\
&\leq(F_u^n+F_v^{n})+\tau\sum\limits_{k=2}^{n}\left(\nu_1\|\Delta_h\hat{\eta}_u^k\|^2+\nu_2\|\Delta_h\hat{\eta}_v^k\|^2\right)\notag\\
&\leq (F_u^{1}+F_v^{1}) +\tau \sum\limits_{k=2}^{n}(\|\nabla\eta_u^k\|^2+\|\nabla\eta_v^k\|^2)+C\tau^4+Ch^{2k+2}\notag\\
&\leq C\tau \sum\limits_{k=2}^{n}(\|\nabla\eta_u^k\|^2+\|\nabla\eta_v^k\|^2)+C\tau^4+Ch^{2k+2}, \quad 2\leq n\leq m.\notag
\end{align}
By the discrete Gromwall inequality presented in Lemma \ref{lemma3} and the Cauchy-Schwarz inequality in $\mathbb{R}^n$, we have  
\begin{align}
&\quad\left(\|\nabla\eta_u^m\|^2+\|\nabla\eta_v^m\|^2\right)+\tau\sum\limits_{k=2}^{m}\left(\|\Delta_h\hat{\eta}_u^k\|^2+\|\Delta_h\hat{\eta}_v^k\|^2\right)\leq C\tau^4+Ch^{2k+2}. \notag 
\end{align}
From the transfer formula in Lemma \ref{lemma2.2}, we get 
\begin{align}
\tau\left(\|\Delta_h\eta_u^m\|+\|\Delta_h\eta_v^m\|\right)
&\leq 2\tau\left(\sum\limits_{k=1}^{m}\|\Delta_h\hat{\eta}_u^k\|+\sum\limits_{k=1}^{m}\|\Delta_h\hat{\eta}_v^k\|\right)\notag\\
&\leq 2\sqrt{T}\left(\sqrt{\tau\sum\limits_{k=1}^{m}\|\Delta_h\hat{\eta}_u^k\|^2}+\sqrt{\tau\sum\limits_{k=1}^{m}\|\Delta_h\hat{\eta}_v^k\|^2}\right)\notag\\
&\leq C(\tau^2+h^{k+1}).\notag 
\end{align}
By the principle of mathematical induction, we conclude that the conclusion of the lemma holds true.
\end{proof}
By virtue of the result established in Lemma \ref{lemma3.2}, we can derive the boundedness of the numerical solutions with respect to the infinity norm.
\begin{theorem}\label{thm1}
The solutions to the fully discrete numerical scheme \eqref{12281754}-\eqref{12281757} are bounded in the infinity norm, that is, there exists a positive constant $C$ independent of the mesh parameters $\tau$ and $h$ such that
\begin{align}
\|u_h^n\|_\infty +\|v_h^n\|_\infty \leq C,\quad 0\leq n\leq N.\notag 
\end{align}
\begin{proof}
The proof of this theorem is already implicit in that of Lemma \ref{lemma3.2}. For the sake of reader convenience, we provide a complete and self-contained proof herein. The argument is divided into two distinct cases for the sake of clarity.\\
$\mathbf{Case~I}: \tau \leq h$\\
With the help of the discrete inverse inequality, it holds that 
\begin{align}
\|u_h^n\|_{\infty} +\|v_h^n\|_{\infty}&\leq \|R_hu^n\|_{\infty}+\|R_hv^n\|_{\infty}+\|\eta_u^n\|_{\infty}+\|\eta_v^n\|_{\infty}\notag\\
&\leq C+ Ch^{2-\frac{d}{2}}\leq C,\quad 0\leq n\leq N.\notag 
\end{align}
$\mathbf{Case~II}: \tau > h$\\
By virtue of the discrete Agmon inequality, we can derive
\begin{align}
\|u_h^n\|_{\infty} +\|v_h^n\|_{\infty}&\leq \|R_hu^n\|_{\infty}+\|R_hv^n\|_{\infty}+\|\eta_u^n\|_{\infty}+\|\eta_v^n\|_{\infty}\notag\\ 
&\leq C+C\| \nabla \eta_u^n \|^{1/2} \| \Delta_h \eta_u^n \|^{1/2}+C\| \nabla \eta_v^n \|^{1/2} \| \Delta_h \eta_v^n \|^{1/2}\notag\\
&\leq C+C\tau^{\frac{3}{2}}\leq C,\quad 0\leq n\leq N.\notag 
\end{align}
All this completes the proof.
\end{proof}
\end{theorem}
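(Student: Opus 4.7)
The plan is to start from the triangle-inequality decomposition $\|u_h^n\|_\infty \leq \|R_h u^n\|_\infty + \|\eta_u^n\|_\infty$ (and analogously for $v_h^n$). The first term is bounded by a constant depending only on $u$: by standard properties of the Ritz projection together with the Sobolev embedding $H^2(\Omega)\hookrightarrow L^\infty(\Omega)$ for $d=2,3$, we have $\|R_h u^n\|_\infty \leq \|u(t_n)\|_\infty + \|\xi_u^n\|_\infty \leq C$ uniformly in $n$, and likewise for $v$. So the whole task reduces to controlling $\|\eta_u^n\|_\infty + \|\eta_v^n\|_\infty$ uniformly in $n$, $\tau$, and $h$.

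For the error part I would carry out exactly the case split that was already used inside the induction of Lemma \ref{lemma3.2}, but now exposed as a standalone argument. When $\tau\leq h$, I would apply the inverse inequality \eqref{202512302043} together with Poincar\'e's inequality to obtain
\begin{align*}
\|\eta_u^n\|_\infty + \|\eta_v^n\|_\infty
\leq C h^{-d/2}\bigl(\|\eta_u^n\|+\|\eta_v^n\|\bigr)
\leq C h^{-d/2}\bigl(\|\nabla\eta_u^n\|+\|\nabla\eta_v^n\|\bigr),
\end{align*}
and then insert the $H^1$-bound from Lemma \ref{lemma3.2}, namely $\|\nabla\eta_u^n\|+\|\nabla\eta_v^n\|\leq C(\tau^2+h^{k+1})$. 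Since in this case $\tau\leq h$, the right-hand side becomes $Ch^{-d/2}(h^2+h^{k+1})\leq C h^{2-d/2}$, which is bounded (and in fact tends to zero) for $d=2,3$ and $k\geq 1$.

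When $\tau > h$, the inverse inequality is too crude, and I would instead invoke the discrete Agmon inequality \eqref{202512302053} to write
\begin{align*}
\|\eta_u^n\|_\infty + \|\eta_v^n\|_\infty
\leq C\|\nabla\eta_u^n\|^{1/2}\|\Delta_h\eta_u^n\|^{1/2}
  + C\|\nabla\eta_v^n\|^{1/2}\|\Delta_h\eta_v^n\|^{1/2}.
\end{align*}
Applying Lemma \ref{lemma3.2} to both factors gives $\|\nabla\eta^n\|\leq C(\tau^2+h^{k+1})$ and $\|\Delta_h\eta^n\|\leq C\tau^{-1}(\tau^2+h^{k+1})$, so in the regime $\tau>h$ and $k\geq 1$ we obtain $\|\eta_u^n\|_\infty+\|\eta_v^n\|_\infty \leq C\tau^{-1/2}(\tau^2+h^{k+1})\leq C\tau^{3/2}$, which is again bounded.

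Combining the two cases yields $\|\eta_u^n\|_\infty+\|\eta_v^n\|_\infty\leq C$ uniformly, and adding the Ritz-projection bound closes the argument. The only real subtlety is the second case: one must exploit the $\tau^{-1}$ weakening of the $\Delta_h$-bound and check that the geometric mean $\|\nabla\eta^n\|^{1/2}\|\Delta_h\eta^n\|^{1/2}$ remains uniformly bounded when $\tau>h$; this is exactly where the $\tau^{3/2}$ scaling that appeared inside the induction of Lemma \ref{lemma3.2} reappears, and it relies crucially on the fact that we have a genuine $H^1$-estimate and not merely an $L^2$-estimate for the finite element error. Otherwise the argument is a clean consequence of the already-established lemmas and needs no further induction.
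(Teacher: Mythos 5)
Your proposal is correct and follows essentially the same route as the paper: the same decomposition $u_h^n = R_h u^n - \eta_u^n$, the same case split on $\tau \le h$ versus $\tau > h$, the inverse inequality plus Poincar\'e in the first case and the discrete Agmon inequality in the second, all fed by the bounds of Lemma \ref{lemma3.2}, yielding the identical $Ch^{2-d/2}$ and $C\tau^{3/2}$ scalings. Your justification of $\|R_h u^n\|_\infty \le C$ via Sobolev embedding is in fact slightly more explicit than the paper's, which simply asserts this bound.
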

By exploiting the boundedness of the numerical solutions in $L^\infty$-norm, we can  establish the unconditionally optimal error estimates for the fully discrete scheme. 
\begin{theorem}\label{thm2}
Let $u^n$ and $v^n$ be the solutions to the continuous equations \eqref{202512272133}-\eqref{202512272136}, and let $u^n_h$ and $v^n_h$ be the solutions to the fully discrete numerical scheme \eqref{12281754}-\eqref{12281757}. For sufficiently small $\tau$ and $h$, we have
\begin{align}
&\|u^n-u_h^n\|+\|v^n-v_h^n\| \leq C(\tau^2+h^{k+1}),\quad 0\leq n\leq N,\notag\\
&\|\nabla u^n-\nabla u_h^n\|+\|\nabla v^n-\nabla v_h^n\|\leq C(\tau^2+h^{k}),\quad 0\leq n\leq N.\notag
\end{align}
\begin{proof}
We first prove the optimal convergence rate in the energy norm. For this purpose, using the triangle inequality, the approximation property of the Ritz projection operator, and Lemma \ref{lemma3.2}, we have 
\begin{align*}
\|\nabla u^n-\nabla u_h^n\|+\|\nabla v^n-\nabla v_h^n\|\leq \|\nabla \eta_u^n\|+\|\nabla \eta_v^n\|+\|\nabla \xi_u^n\|+\|\nabla \xi_v^n\|\leq C(\tau^2+h^{k}).
\end{align*}
Subsequently, we focus on deriving the error estimate in the $L^2$ norm. To this end, substituting $\phi_h=\hat{\eta}_u^1$ and $\psi_h=\hat{\eta}_v^1$ into \eqref{202512292126} and \eqref{202512292127}, respectively, and then taking the real part of both sides of each equation subsequently gives
\begin{align}
&\|\eta_u^1\|^2-\|\eta_u^0\|^2+2\tau\nu_1\|\nabla\hat{\eta}_u^1\|^2 =-2\tau\text{Re}\left[(\kappa_1 + \mathrm{i}\beta_1)\left(f_1(\vert u^\frac{1}{2} \vert^2)\hat{u}^1-f_1(\vert u^\frac{1}{2} \vert^2)\hat{u}_h^1,\hat{\eta}_u^1\right)\right]\notag\\
&\quad -2\tau\text{Re}\left[ (\mu_1 + \mathrm{i}\delta_1)\left(g_1(\vert v^\frac{1}{2} \vert^2)\hat{u}^1-g_1(\vert v^\frac{1}{2} \vert^2)\hat{u}_h^1,\hat{\eta}_u^1\right)\right] + 2\tau\gamma_1  \text{Re}(\hat{\eta}_u^1,\hat{\eta}_u^1)\notag\\
&\quad + 2\tau\text{Re}(R^1_u,\hat{\eta}_u^1)-2\tau\text{Re}(D_\tau \xi_u^1,\hat{\eta}_u^1)+2\tau\gamma_1\text{Re} (\hat{\xi}_u^1,\hat{\eta}_u^1)\notag\\
\quad&\equiv B_1+B_2+B_3+B_4+B_5+B_6,\notag\\
&\|\eta_v^1\|^2-\|\eta_v^0\|^2 + 2\tau\nu_2 \|\nabla\hat{\eta}_v^1\|^2 =-2\tau\text{Re}\left[ (\kappa_2 + \mathrm{i}\beta_2)\left(f_2(\vert u^\frac{1}{2} \vert^2)\hat{v}^1-f_2(\vert u^\frac{1}{2} \vert^2)\hat{v}_h^1,\hat{\eta}_v^1\right)\right]\notag\\
&\quad -2\tau\text{Re}\left[ (\mu_2 + \mathrm{i}\delta_2)\left(g_2(\vert v^\frac{1}{2} \vert^2)\hat{v}^1-g_2(\vert v^\frac{1}{2}  \vert^2)\hat{v}_h^1,\hat{\eta}_v^1\right)\right]+ 2\tau\gamma_2 \text{Re} (\hat{\eta}_v^1,\hat{\eta}_v^1)\notag\\	
&\quad +2\tau\text{Re} (R^1_v,\hat{\eta}_v^1)-2\tau\text{Re}(D_\tau \xi_v^1,\hat{\eta}_v^1)+2\tau\gamma_2 \text{Re}(\hat{\xi}_v^1,\hat{\eta}_v^1)\notag\\
\quad&\equiv B_7+B_8+B_9+B_{10}+B_{11}+B_{12}.\notag 
\end{align}
By following the approach similar to the one employed in \eqref{202512301716}-\eqref{202512301721}, we obtain 
\begin{align}
&B_1+B_2+B_3+B_4+B_5+B_6\leq 2\tau\nu_1\|\nabla\hat{\eta}_u^1\|^2+C\tau\|\hat{\eta}_u^1\|^2+C\tau^4+Ch^{2k+2},\label{202512311916}\notag\\
&B_7+B_8+B_9+B_{10}+B_{11}+B_{12}\leq 2\tau\nu_2\|\nabla\hat{\eta}_v^1\|^2+C\tau\|\hat{\eta}_v^1\|^2+C\tau^4+Ch^{2k+2}.\notag
\end{align}
From the combination of these two results, we have
\begin{align}
\|\eta_u^1\|+\|\eta_v^1\|\leq C(\tau^2+h^{k+1}).
\end{align}
Substituting \(\phi_h = \hat{\eta}_u^n\) and \(\psi_h = \hat{\eta}_v^n\) into \eqref{202512302244} and \eqref{202512302245}, respectively, and extracting the real part from both sides of the equations then yields
\begin{align}
	&G_u^n-G_u^{n-1}+4\tau\nu_1\|\nabla\hat{\eta}_u^n\|^2 \leq -4\tau\text{Re}\left[(\kappa_1 + \mathrm{i}\beta_1)\left(f_1(\vert \tilde{u}^n \vert^2)\hat{u}^n-f_1(\vert \tilde{u}_h^n \vert^2)\hat{u}_h^n,\hat{\eta}_u^n\right)\right]\notag\\
	&\quad -4\tau\text{Re}\left[ (\mu_1 + \mathrm{i}\delta_1)\left(g_1(\vert \tilde{v}^n \vert^2)\hat{u}^n-g_1(\vert \tilde{v}_h^n \vert^2)\hat{u}_h^n,\hat{\eta}_u^n\right)\right] + 4\tau\gamma_1  \text{Re}(\hat{\eta}_u^n,\hat{\eta}_u^n)\notag\\
	&\quad + 4\tau\text{Re}(R^n_u,\hat{\eta}_u^n)-4\tau\text{Re}(D_\tau \xi_u^n,\hat{\eta}_u^n)+4\tau\gamma_1\text{Re} (\hat{\xi}_u^n,\hat{\eta}_u^n)\notag\\
	\quad&\equiv B_u^1+B_u^2+B_u^3+B_u^4+B_u^5+B_u^6,\label{202512311956}\\
	&G_v^n-G_v^{n-1} + 4\tau\nu_2 \|\nabla\hat{\eta}_v^n\|^2 \leq -4\tau\text{Re}\left[ (\kappa_2 + \mathrm{i}\beta_2)\left(f_2(\vert \tilde{u}^n \vert^2)\hat{v}^n-f_2(\vert \tilde{u}^n_h \vert^2)\hat{v}_h^n,\hat{\eta}_v^n\right)\right]\notag\\
	&\quad -4\tau\text{Re}\left[ (\mu_2 + \mathrm{i}\delta_2)\left(g_2(\vert \tilde{v}^n \vert^2)\hat{v}^n-g_2(\vert \tilde{v}^n_h  \vert^2)\hat{v}_h^n,\hat{\eta}_v^n\right)\right]+ 4\tau\gamma_2 \text{Re} (\hat{\eta}_v^n,\hat{\eta}_v^n)\notag\\	
	&\quad +4\tau\text{Re} (R^n_v,\hat{\eta}_v^n)-4\tau\text{Re}(D_\tau \xi_v^n,\hat{\eta}_v^n)+4\tau\gamma_2 \text{Re}(\hat{\xi}_v^n,\hat{\eta}_v^n)\notag\\
	\quad&\equiv B_v^1+B_v^2+B_v^3+B_v^4+B_v^5+B_v^6,\label{202512311957}
\end{align}
where the following notations are adopted:
\begin{align}
	&(D_\tau \eta_u^n,\hat{\eta}_u^n)\geq \frac{1}{4\tau}(G_u^n-G_u^{n-1}), \quad G_u^n= (1+\theta)\|\eta_u^n\|^2+(1-\theta)\| \eta_u^{n-1}\|^2\geq \|\eta_u^n\|^2,\notag\\
	&(D_\tau \eta_v^n,\hat{\eta}_v^n)\geq \frac{1}{4\tau}(G_v^n-G_v^{n-1}), \quad G_v^n= (1+\theta)\| \eta_v^n\|^2+(1-\theta)\| \eta_v^{n-1}\|^2\geq \| \eta_v^n\|^2.\notag
\end{align}
Following the same approach as in \eqref{202512311042}-\eqref{202512311039} and exploiting boundedness of the numerical solutions in $L^{\infty}$-norm, we arrive at
\begin{align}
&\quad B_u^1+B_u^2+B_u^3+B_u^4+B_u^5+B_u^6\notag\\
&\leq 4\tau\nu_1\|\nabla\hat{\eta}_u^n\|^2+C\tau\|\hat{\eta}_u^n\|^2+C\tau\|\tilde{\eta}_v^n\|^2+C\tau\|\tilde{\eta}_u^n\|^2+C\tau^5+C\tau h^{2k+2},\label{202512312010}\\
&\quad B_v^1+B_v^2+B_v^3+B_v^4+B_v^5+B_v^6 \notag\\
&\leq 4\tau\nu_2\|\nabla_h\hat{\eta}_v^n\|^2+C\tau\|\hat{\eta}_v^n\|^2+C\tau\|\tilde{\eta}_u^n\|^2+C\tau\|\nabla\tilde{\eta}_v^n\|^2+C\tau^5+C\tau h^{2k+2}.\label{202512312016}
\end{align}
Substituting \eqref{202512312010}-\eqref{202512312016} into \eqref{202512311956}-\eqref{202512311957}, respectively, we then obtain
\begin{align}
&\quad(G_u^n+G_v^{n})-(G_u^{n-1}+G_v^{n-1})\notag\\
&\leq C\tau\|\hat{\eta}_u^n\|^2+ C\tau\|\hat{\eta}_v^n\|^2+C\tau\|\tilde{\eta}_u^n\|^2+C\tau\|\tilde{\eta}_v^n\|^2+C\tau^5+C\tau h^{2k+2},\quad 2\leq n\leq N.\notag 
\end{align}
Upon summing both sides of the above inequality, we arrive at
\begin{align*}
\|\eta_u^n\|^2+\|\eta_v^n\|^2&\leq G_u^{1}+G_v^{1} +\tau \sum\limits_{k=2}^{n}(\|\eta_u^k\|^2+\|\eta_v^k\|^2)+C\tau^4+Ch^{2k+2}\notag\\
&\leq C\tau \sum\limits_{k=2}^{n}(\|\eta_u^k\|^2+\|\eta_v^k\|^2)+C\tau^4+Ch^{2k+2}, \quad 2\leq n\leq N.
\end{align*}
With the help of the discrete Gronwall inequality, it holds that 
\begin{align*}
\|\eta_u^n\|+\|\eta_v^n\|\leq C(\tau^2+h^{k+1}),\quad 2\leq n\leq N.
\end{align*}
In the last, employing the triangle inequality yields 
\begin{align*}
\|u^n-u_{h}^n\|+\|v^n-v_{h}^n\|\leq \|\eta_{u}^n\|+\|\eta_{v}^n\|+\|\xi_{u}^n\|+\|\xi_{v}^n\|\leq C(\tau^2+h^{k+1}),\quad 0\leq n\leq N.
\end{align*}
This concludes the proof of the theorem.
\end{proof}
\end{theorem}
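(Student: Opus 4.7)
The plan is to split the theorem into its two conclusions and recycle as much of the machinery from Lemma \ref{lemma3.2} and Theorem \ref{thm1} as possible. The $H^1$-norm bound is essentially immediate: decomposing $u^n-u_h^n=\xi_u^n+\eta_u^n$ and $v^n-v_h^n=\xi_v^n+\eta_v^n$, the triangle inequality reduces the claim to bounds on $\|\nabla \xi^n\|$ and $\|\nabla \eta^n\|$, which are $O(h^k)$ by the standard Ritz projection estimate and $O(\tau^2+h^{k+1})$ by Lemma \ref{lemma3.2}, respectively. No extra work is required.

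For the $L^2$-norm estimate, I would follow the same structural template as the proof of Lemma \ref{lemma3.2}, but change the test function from $-\Delta_h\hat\eta^n$ to $\hat\eta^n$ itself so as to generate $\|\eta_u^n\|^2$ rather than $\|\nabla\eta_u^n\|^2$ on the left-hand side. Concretely, I would start at $n=1$ by testing the error equations \eqref{202512292126}-\eqref{202512292127} with $\phi_h=\hat\eta_u^1$, $\psi_h=\hat\eta_v^1$, take real parts, bound the nonlinear, truncation, and projection terms exactly as in \eqref{202512301716}-\eqref{202512301721} but with $\|\hat\eta\|$ playing the role formerly held by $\|\Delta_h\hat\eta\|$/$\|\nabla\hat\eta\|$, and deduce $\|\eta_u^1\|+\|\eta_v^1\|\leq C(\tau^2+h^{k+1})$. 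For $n\geq 2$, I would test \eqref{202512302244}-\eqref{202512302245} with $\hat\eta_u^n$ and $\hat\eta_v^n$, take real parts, and invoke Lemma \ref{lemma1} applied to the sequence $\{\eta_u^n\}$ to produce the telescoping quantity $G_u^n=(1+\theta)\|\eta_u^n\|^2+(1-\theta)\|\eta_u^{n-1}\|^2\geq \|\eta_u^n\|^2$ (and analogously $G_v^n$).

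The nonlinear differences such as $f_1(|\tilde u^n|^2)\hat u^n-f_1(|\tilde u_h^n|^2)\hat u_h^n$ are handled by the now-standard add-and-subtract trick: write them as $f_1(|\tilde u^n|^2)(\hat u^n-\hat u_h^n)+\bigl(f_1(|\tilde u^n|^2)-f_1(|\tilde u_h^n|^2)\bigr)\hat u_h^n$, use the $C^1$ regularity of $f_1$ together with the identity $|a|^2-|b|^2=(a+b)\overline{(a-b)}+\text{Re-part tricks}$, and bound each factor. The critical ingredient that makes everything close is Theorem \ref{thm1}: the $L^\infty$-boundedness of $u_h^n,v_h^n$ (hence of $\tilde u_h^n,\tilde v_h^n,\hat u_h^n,\hat v_h^n$) combined with $L^\infty$-boundedness of the exact solution reduces the nonlinear differences to expressions of the form $C(\|\tilde\eta_u^n\|+\|\tilde\eta_v^n\|+\|\hat\eta_u^n\|+h^{k+1})$. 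After summing, using that $F^1,G^1$ have already been controlled by the $n=1$ bound, and applying the discrete Gronwall inequality in Lemma \ref{lemma3}, the estimate $\|\eta_u^n\|+\|\eta_v^n\|\leq C(\tau^2+h^{k+1})$ follows; one final triangle inequality against the Ritz projection error delivers the stated $L^2$ bound.

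I do not expect any genuine obstacle, as all structural difficulties have been absorbed into the earlier results. The one subtle point worth attention is verifying that the truncation residuals $R_u^n,R_v^n$ built from the DLN difference quotient and the off-center time $t_{n-1+\theta/2}$ are $O(\tau^2)$ uniformly in $n$; this is a standard Taylor-expansion computation, relying on the smoothness of the exact solutions that is tacitly assumed throughout the analysis. Once that is in hand, the case analysis between $\tau\leq h$ and $\tau>h$ that was needed in Lemma \ref{lemma3.2} is \emph{not} needed again here, since Theorem \ref{thm1} already supplies the $L^\infty$-control uniformly in the regime of $\tau$ and $h$.
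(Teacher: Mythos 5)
Your proposal is correct and follows essentially the same route as the paper: the $H^1$ bound via the triangle inequality and Lemma \ref{lemma3.2}, and the $L^2$ bound by testing the error equations with $\hat{\eta}_u^n,\hat{\eta}_v^n$, invoking Lemma \ref{lemma1} for the telescoping quantities $G_u^n,G_v^n$, splitting the nonlinear differences with the add-and-subtract trick under the $L^\infty$-bound of Theorem \ref{thm1}, and closing with the discrete Gronwall inequality and a final triangle inequality. Your observation that the $\tau\leq h$ versus $\tau>h$ case analysis is no longer needed at this stage is also consistent with how the paper organizes the argument.
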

\section{Numerical examples}\label{section4}
In this section, three numerical examples, including both two- and three-dimensional cases, are presented to verify the correctness of the theoretical results. All numerical results in this paper are implemented with the aid of the open source finite element software FreeFEM \cite{Hecht2012}. In all examples, we test the convergence accuracy of the following errors for the numerical solutions $u_h^n$ and $v_h^n$:
\begin{align}
&E^1_{u} = \|\nabla(u^N-u_h^N)\|,\quad E^0_{u} = \|u^N-u_h^N\|,\notag\\
&E^1_{v} = \|\nabla(v^N-v_h^N)\|,\quad E^0_{v} = \|v^N-v_h^N\|.\notag 
\end{align}
\begin{example}\label{example1}
In the first  numerical example, we consider the following initial-boundary value problem for the coupled Ginzburg-Landau system: 
\begin{align}
&u_t - (1 + \mathrm{i})\Delta u + (1 + \mathrm{i})\vert u \vert^2u+(1 + \mathrm{i})\vert v \vert^2u - u = f_1(x,y,t), \quad (x,y,t) \in [0,1]^3, \label{202601042211}\\
&v_t - (1 + \mathrm{i})\Delta v + (1 + \mathrm{i})\vert u \vert^2v+(1 + \mathrm{i})\vert v \vert^2v - v = f_2(x,y,t), \quad (x,y,t) \in [0,1]^3, \label{202601042212}
\end{align}
with the exact solutions 
\begin{align}
&u(x,y,t) = i\exp(t)\sin(\pi x)\sin(\pi y),\quad v(x,y,t) = e^{it^2}\sin(x)(1-x)\sin(y)(1-y).\notag
\end{align}
The right-hand side terms \(f_1\) and \(f_2\) can be easily computed from the aforementioned exact  solutions.

The numerical results of this example are presented in Tables \ref{table1}--\ref{table5}. To test the convergence accuracy in the spatial direction, we fix a sufficiently small time step $\tau$ and select different spatial mesh sizes $h=1/5, 1/10, 1/15, 1/20, 1/25$. The numerical results corresponding to different polynomial degrees $k=1,2,3$ and different parameters $\theta = 0.25,0.5,0.75$ are presented in Tables \ref{table1}--\ref{table3}, respectively. It can be seen from the data in the tables that the convergence accuracy of the $L^2$ and $H^1$-norm errors in the spatial direction achieves the theoretically predicted orders. When testing the temporal accuracy, we set $\tau=h$. Tables \ref{table4} and \ref{table5} present the numerical results for $k=2,3$ and $\theta=0.1,0.9$, respectively. It can be observed from these numerical results that the temporal scheme achieves the theoretically predicted second-order convergence rate.
In addition, we also present the verification of the mesh-ratio independence in Figure \ref{figure1}. It can be seen from the figure that when a time step $\tau$ is fixed, the $L^2$-norm error tends to a constant as the spatial mesh is refined. The above numerical results fully demonstrate the robustness and effectiveness of the numerical algorithm constructed in this paper with respect to the parameters.

\begin{table}
	\centering 
	\caption{Convergence rates for Example \ref{example1} in the spatial direction with simulation parameters \(\theta=0.25\) and \(k=1\).}
	\label{table1}
	\begin{tabular}{ccccccccc}
		\toprule 
		$h$ & $E^1_u$ & $\text{Order}$ & $E^0_u$ & $\text{Order}$ & $E^1_v$ & $\text{Order}$ & $E^0_v$ & $\text{Order}$\\
		\midrule
$1/5$ & 6.8428e-01 &       --       & 3.9094e-02 &     --         & 4.3151e-02 &     --         & 2.4896e-03 &        --      \\
$1/10$ & 3.4723e-01 & 0.9787 & 9.9975e-03 & 1.9673 & 2.1863e-02 & 0.9809 & 6.3330e-04 & 1.9749 \\
$1/15$ & 2.3212e-01 & 0.9932 & 4.4620e-03 & 1.9897 & 1.4611e-02 & 0.9940 & 2.8237e-04 & 1.9921 \\
$1/20$ & 1.7426e-01 & 0.9967 & 2.5136e-03 & 1.9949 & 1.0967e-02 & 0.9970 & 1.5901e-04 & 1.9961 \\
$1/25$ & 1.3947e-01 & 0.9980 & 1.6098e-03 & 1.9970 & 8.7774e-03 & 0.9982 & 1.0182e-04 & 1.9977 \\
		\bottomrule
	\end{tabular}
\end{table}
\begin{table}
	\centering 
	\caption{Convergence rates for Example \ref{example1} in the spatial direction with simulation parameters \(\theta=0.5\) and \(k=2\).}
	\label{table2}
	\begin{tabular}{ccccccccc}
		\toprule 
		$h$ & $E^1_u$ & $\text{Order}$ & $E^0_u$ & $\text{Order}$ & $E^1_v$ & $\text{Order}$ & $E^0_v$ & $\text{Order}$\\
\midrule
$1/5$ & 8.5176e-02 &         --     & 1.9199e-03 &      --        & 4.9292e-03 &      --        & 1.0456e-04 &      --        \\
$1/10$ & 2.1535e-02 & 1.9838 & 2.4308e-04 & 2.9815 & 1.2415e-03 & 1.9892 & 1.3152e-05 & 2.9910 \\
$1/15$ & 9.5910e-03 & 1.9949 & 7.2194e-05 & 2.9942 & 5.5254e-04 & 1.9966 & 3.9013e-06 & 2.9972 \\
$1/20$ & 5.3989e-03 & 1.9975 & 3.0482e-05 & 2.9971 & 3.1095e-04 & 1.9983 & 1.6465e-06 & 2.9986\\
$1/25$ & 3.4565e-03 & 1.9985 & 1.5613e-05 & 2.9983 & 1.9905e-04 & 1.9990 & 8.4316e-07 & 2.9992 \\
\bottomrule
	\end{tabular}
\end{table}
\begin{table}
	\centering 
	\caption{Convergence rates for Example \ref{example1} in the spatial direction with simulation parameters \(\theta=0.75\) and \(k=3\).}
	\label{table3}
	\begin{tabular}{ccccccccc}
		\toprule 
$h$ & $E^1_u$ & $\text{Order}$ & $E^0_u$ & $\text{Order}$ & $E^1_v$ & $\text{Order}$ & $E^0_v$ & $\text{Order}$\\
		\midrule
$1/5$ & 6.0571e-03 &     --         & 1.7462e-04 &      --        & 2.6972e-04 &      --        & 7.2693e-06 &      --        \\
$1/10$ & 7.5441e-04 & 3.0052 & 1.1042e-05 & 3.9832 & 3.3270e-05 & 3.0192 & 4.5403e-07 & 4.0010 \\
$1/15$ & 2.1956e-04 & 3.0442 & 2.1822e-06 & 3.9988 & 9.6455e-06 & 3.0537 & 8.9461e-08 & 4.0061 \\
$1/20$ & 9.0346e-05 & 3.0867 & 6.8969e-07 & 4.0038 & 3.9544e-06 & 3.0995 & 2.8223e-08 & 4.0102\\
$1/25$ & 4.4922e-05 & 3.1313 & 2.8219e-07 & 4.0049 & 1.9582e-06 & 3.1495 & 1.1530e-08 & 4.0119 \\
		\bottomrule
	\end{tabular}
\end{table}

\begin{table}
	\centering 
	\caption{Convergence rates for Example \ref{example1} in the temporal direction with simulation parameters \(\theta=0.1\) and \(k=2\).}
	\label{table4}
	\begin{tabular}{ccccccccc}
		\toprule 
		$\tau$ & $E^1_u$ & $\text{Order}$ & $E^0_u$ & $\text{Order}$ & $E^1_v$ & $\text{Order}$ & $E^0_v$ & $\text{Order}$\\
		\midrule
	$1/5$ & 2.4432e-01 &        --      & 2.1696e-02 &       --       & 1.3196e-02 &      --        & 2.7083e-03 &       --       \\
	$1/10$ & 6.1992e-02 & 1.9786 & 4.8780e-03 & 2.1531 & 3.3263e-03 & 1.9882 & 6.8918e-04 & 1.9744 \\
	$1/15$ & 2.7765e-02 & 1.9810 & 2.2025e-03 & 1.9610 & 1.4392e-03 & 2.0663 & 2.9714e-04 & 2.0749 \\
	$1/20$ & 1.5643e-02 & 1.9944 & 1.2344e-03 & 2.0126 & 8.0934e-04 & 2.0008 & 1.6722e-04 & 1.9983 \\
	$1/25$ & 1.0018e-02 & 1.9969 & 7.8830e-04 & 2.0099 & 5.1742e-04 & 2.0048 & 1.0690e-04 & 2.0051 \\
		\bottomrule
	\end{tabular}
\end{table}

\begin{table}
	\centering 
	\caption{Convergence rates for Example \ref{example1} in the temporal direction with simulation parameters \(\theta=0.9\) and \(k=3\).}
	\label{table5}
	\begin{tabular}{ccccccccc}
		\toprule 
		$\tau$ & $E^1_u$ & $\text{Order}$ & $E^0_u$ & $\text{Order}$ & $E^1_v$ & $\text{Order}$ & $E^0_v$ & $\text{Order}$\\
		\midrule
		$1/5$ & 2.1736e-02 &       --       & 3.8386e-03 &       --       & 4.8911e-03 &        --      & 1.0955e-03 &       --       \\
		$1/10$ & 4.6175e-03 & 2.2349 & 9.3041e-04 & 2.0446 & 1.2432e-03 & 1.9760 & 2.7870e-04 & 1.9748 \\
		$1/15$ & 1.9626e-03 & 2.1101 & 4.0692e-04 & 2.0397 & 5.5608e-04 & 1.9843 & 1.2468e-04 & 1.9839 \\
		$1/20$ & 1.0843e-03 & 2.0624 & 2.2713e-04 & 2.0268 & 3.1389e-04 & 1.9879 & 7.0378e-05 & 1.9877 \\
		$1/25$ & 6.8760e-04 & 2.0413 & 1.4470e-04 & 2.0204 & 2.0134e-04 & 1.9900 & 4.5144e-05 & 1.9899 \\
		\bottomrule
	\end{tabular}
\end{table}
\end{example}

\begin{figure}[htb]
	\centering
	\subfigure[\scriptsize $L^2$-norm errors for $u$]{
		\label{$L^2$-norm errors for u} 
		\includegraphics[width=2.5in]{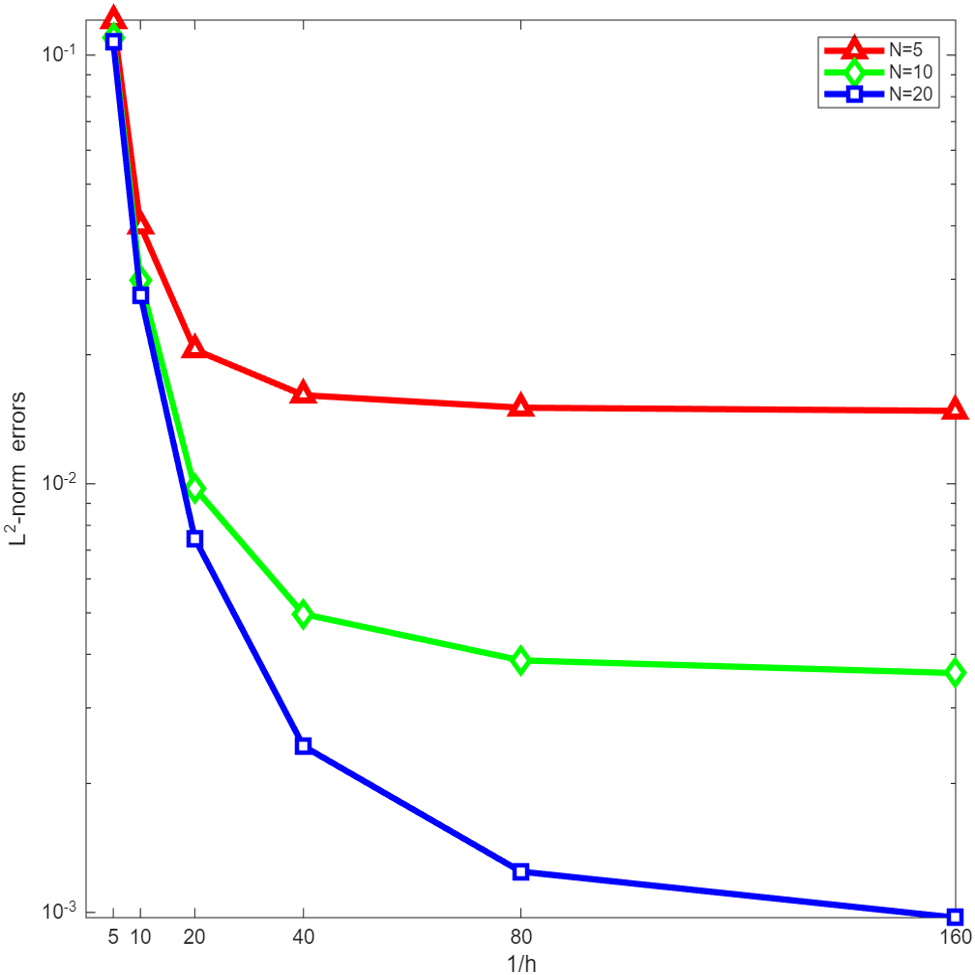}} 
	\hspace{20pt} 
	\subfigure[\scriptsize $L^2$-norm errors for $v$]{
		\label{$L^2$-norm errors for v} 
		\includegraphics[width=2.5in]{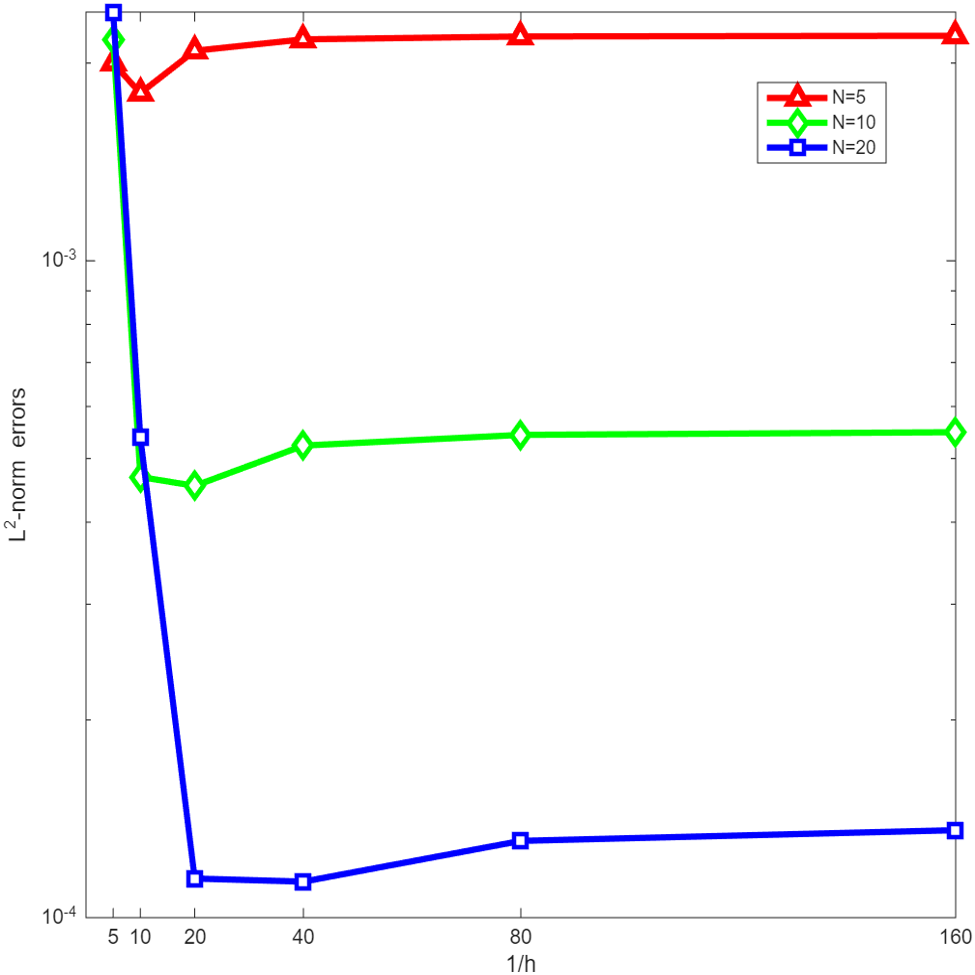}}
	\hspace{20pt}
	\caption{Trend of the $L^2$-norm errors of $u$ and $v$ as the mesh size $h$ decreases for Example \ref{example1} with parameters $k=1,\theta = 0.5$.}
	\label{figure1} 
\end{figure}

\begin{example} \label{example2}
In the present example, we consider and investigate a two-dimensional system of coupled Ginzburg-Landau complex equations that incorporate significant nonlinear effects, which is stated as follows:
\begin{align}
	&u_t - (1 + 2\mathrm{i})\Delta u + (1 + 3\mathrm{i})(\vert u \vert^2+\vert u \vert^4)u+(1 +4 \mathrm{i})(\vert v \vert^2+\vert v \vert^4)u - 5u = f_1(x,y,t), \notag\\
	&v_t - (5 + \mathrm{i})\Delta v + (4 + \mathrm{i})(\vert u \vert^2+\vert u \vert^4)v+(3 + \mathrm{i})(\vert v \vert^2+\vert v \vert^4)v - 2v = f_2(x,y,t), \notag
\end{align}
where $(x,y,t)\in[0,1]^3$. For the purpose of assessing the convergence rate, we adopt the following exact solutions:
\begin{align}
u(x,y,t) = e^{it}(1+5t^2)x(1-x)\sin(\pi y), \quad v(x,y,t) = (1+3i)(t+1)^2x(1-x)y(1-y).\notag 
\end{align}
and complex-valued functions \(f_1\) and \(f_2\) are obtained by substituting the two above exact solutions into the governing system. 

In the second example, we investigate the effectiveness of the numerical algorithm for high-degree nonlinear terms. Tables \ref{table6} to \ref{table10} present the errors and convergence orders in the temporal and spatial directions, and Figure \ref{figure2} shows the verification of the unconditional convergence of the numerical scheme. It further illustrates the applicability of the numerical scheme constructed in this paper to different nonlinear terms.
\begin{table}
	\centering 
	\caption{Convergence rates for Example \ref{example2} in the spatial direction with simulation parameters \(\theta=0.0\) and \(k=1\).}
	\label{table6}
	\begin{tabular}{ccccccccc}
		\toprule 
		$h$ & $E^1_u$ & $\text{Order}$ & $E^0_u$ & $\text{Order}$ & $E^1_v$ & $\text{Order}$ & $E^0_v$ & $\text{Order}$\\
		\midrule
		$1/5$ & 1.8103e-01 &       --       & 1.0407e-02 &     --         & 1.5127e-01 &     --         & 8.7464e-03 &       --       \\
		$1/10$ & 9.1783e-02 & 0.9799 & 2.6533e-03 & 1.9717 & 7.6644e-02 & 0.9809 & 2.2238e-03 & 1.9757 \\
		$1/15$ & 6.1347e-02 & 0.9936 & 1.1835e-03 & 1.9911 & 5.1220e-02 & 0.9940 & 9.9143e-04 & 1.9924 \\
		$1/20$ & 4.6052e-02 & 0.9969 & 6.6657e-04 & 1.9956 & 3.8448e-02 & 0.9970 & 5.5828e-04 & 1.9962 \\
		$1/25$ & 3.6857e-02 & 0.9981 & 4.2686e-04 & 1.9974 & 3.0771e-02 & 0.9982 & 3.5748e-04 & 1.9978\\
		\bottomrule
	\end{tabular}
\end{table}

\begin{table}
	\centering 
	\caption{Convergence rates for Example \ref{example2} in the spatial direction with simulation parameters \(\theta=0.5\) and \(k=2\).}
	\label{table7}
	\begin{tabular}{ccccccccc}
	\toprule 
$h$ & $E^1_u$ & $\text{Order}$ & $E^0_u$ & $\text{Order}$ & $E^1_v$ & $\text{Order}$ & $E^0_v$ & $\text{Order}$\\
\midrule
$1/5$ & 2.1463e-02 &     --         & 4.6907e-04 &       --       & 1.7032e-02 &       --       & 3.6021e-04 &        --      \\
$1/10$ & 5.4120e-03 & 1.9876 & 5.9215e-05 & 2.9858 & 4.2910e-03 & 1.9888 & 4.5250e-05 & 2.9928 \\
$1/15$ & 2.4092e-03 & 1.9961 & 1.7577e-05 & 2.9955 & 1.9099e-03 & 1.9965 & 1.3420e-05 & 2.9977 \\
$1/20$ & 1.3559e-03 & 1.9981 & 7.4199e-06 & 2.9978 & 1.0749e-03 & 1.9982 & 5.6635e-06 & 2.9987 \\
$1/25$ & 8.6800e-04 & 1.9989 & 3.8001e-06 & 2.9987 & 6.8809e-04 & 1.9988 & 2.9003e-06 & 2.9990 \\
		\bottomrule
	\end{tabular}
\end{table}

\begin{table}
	\centering 
	\caption{Convergence rates for Example \ref{example2} in the spatial direction with simulation parameters \(\theta=1.0\) and \(k=3\).}
	\label{table8}
	\begin{tabular}{ccccccccc}
		\toprule 
	    $h$ & $E^1_u$ & $\text{Order}$ & $E^0_u$ & $\text{Order}$ & $E^1_v$ & $\text{Order}$ & $E^0_v$ & $\text{Order}$\\
		\midrule
		$1/5$ & 1.2902e-03 &       --       & 3.4797e-05 &      --        & 8.9389e-04 &     --         & 2.3487e-05 &    --          \\
		$1/10$ & 1.6027e-04 & 3.0090 & 2.1921e-06 & 3.9885 & 1.0555e-04 & 3.0821 & 1.4573e-06 & 4.0105 \\
		$1/15$ & 4.6642e-05 & 3.0443 & 4.3296e-07 & 4.0003 & 2.8985e-05 & 3.1875 & 2.8583e-07 & 4.0175\\
		$1/20$ & 1.9202e-05 & 3.0849 & 1.3690e-07 & 4.0024 & 1.1379e-05 & 3.2502 & 9.0140e-08 & 4.0116 \\
		$1/25$ & 9.5560e-06 & 3.1274 & 5.6090e-08 & 3.9986 & 5.5180e-06 & 3.2434 & 3.6948e-08 & 3.9967 \\
		\bottomrule
	\end{tabular}
\end{table}

\begin{table}
	\centering 
	\caption{Convergence rates for Example \ref{example2} in the temporal direction with simulation parameters \(\theta=0.35\) and \(k=1\).}
	\label{table9}
	\begin{tabular}{ccccc}
		\toprule 
		$\tau$  & $E^0_u$ & $\text{Order}$ & $E^0_v$ & $\text{Order}$\\
		\midrule
		$1/5$ &  6.6045e-02 &       --       &  3.9110e-02 &       --       \\
		$1/10$ &  1.5669e-02 & 2.0755 & 9.6759e-03 & 2.0151 \\
		$1/15$  & 6.9773e-03 & 1.9953 &  4.3292e-03 & 1.9835 \\
		$1/20$  & 3.9256e-03 & 1.9993 & 2.4411e-03 & 1.9915 \\
		$1/25$ &  2.5115e-03 & 2.0016 &  1.5638e-03 & 1.9958 \\
		\bottomrule
	\end{tabular}
\end{table}

\begin{table}
	\centering 
	\caption{Convergence rates for Example \ref{example2} in the temporal direction with simulation parameters \(\theta=0.65\) and \(k=2\).}
	\label{table10}
	\begin{tabular}{ccccccccc}
		\toprule 
		$\tau$ & $E^1_u$ & $\text{Order}$ & $E^0_u$ & $\text{Order}$ & $E^1_v$ & $\text{Order}$ & $E^0_v$ & $\text{Order}$\\
		\midrule
		$1/5$ & 2.4579e-01 &    --          & 4.7391e-02 &      --        & 6.9293e-02 &        --      & 4.1663e-03 &     --         \\
		$1/10$ & 6.2097e-02 & 1.9848 & 1.1892e-02 & 1.9947 & 1.7358e-02 & 1.9971 & 6.5323e-04 & 2.6731 \\
		$1/15$ & 2.7171e-02 & 2.0385 & 5.1634e-03 & 2.0575 & 7.7389e-03 & 1.9923 & 2.8765e-04 & 2.0229 \\
		$1/20$ & 1.5103e-02 & 2.0412 & 2.8543e-03 & 2.0605 & 4.3522e-03 & 2.0007 & 1.5143e-04 & 2.2302 \\
		$1/25$ & 9.6812e-03 & 1.9930 & 1.8300e-03 & 1.9920 & 2.7855e-03 & 1.9998 & 9.3968e-05 & 2.1385 \\
		\bottomrule
	\end{tabular}
\end{table}

\begin{figure}[htb]
	\centering
	\subfigure[\scriptsize $L^2$-norm errors for $u$]{
		\label{$L^2$-norm errors for u} 
		\includegraphics[width=2.5in]{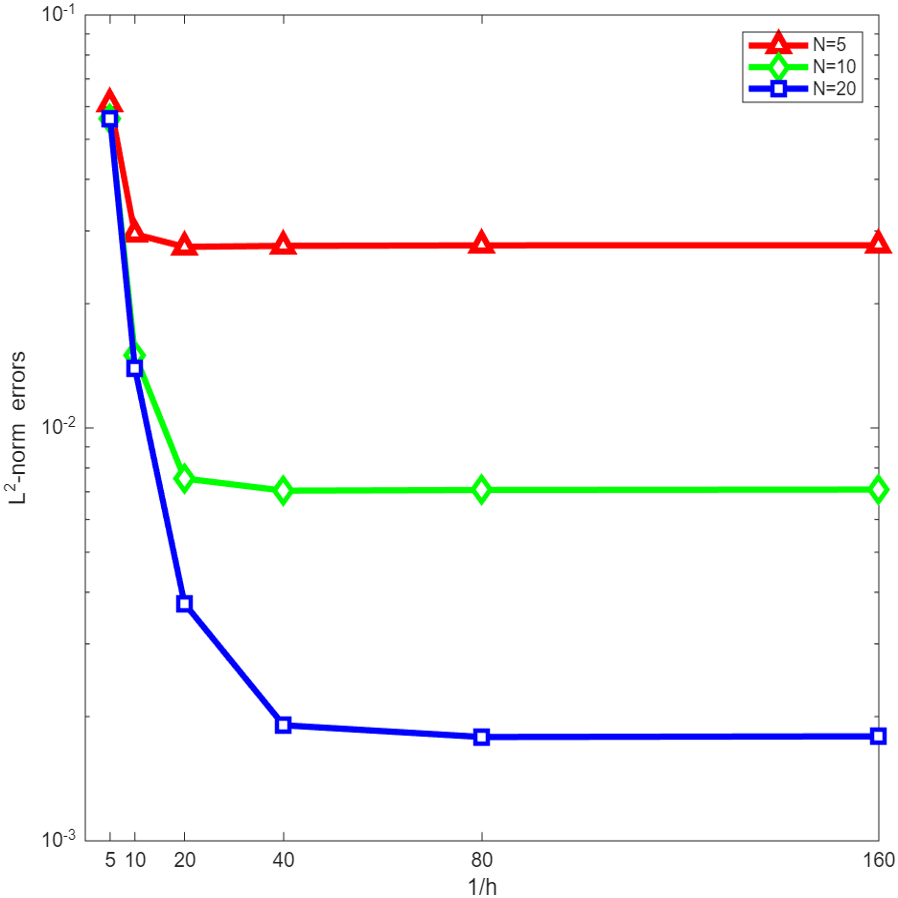}} 
	\hspace{20pt} 
	\subfigure[\scriptsize $L^2$-norm errors for $v$]{
		\label{$L^2$-norm errors for v} 
		\includegraphics[width=2.5in]{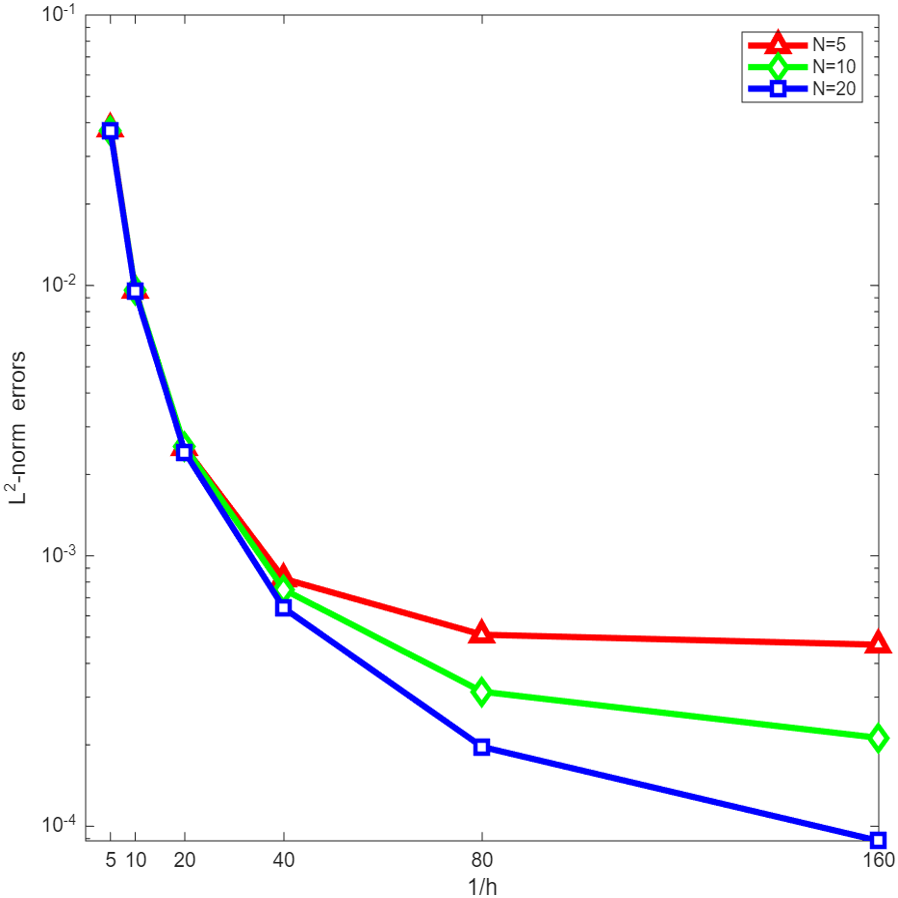}}
	\hspace{20pt}
	\caption{Trend of the $L^2$-norm errors of $u$ and $v$ as the mesh size $h$ decreases for Example \ref{example2} with parameters $k=1,\theta = 0.75$.}
	\label{figure2} 
\end{figure}

\begin{example}\label{example3}
In our final numerical example, we consider a three-dimensional problem governed by the equations given in \eqref{202601042211}-\eqref{202601042212}, with the corresponding exact solutions listed as follows:
\begin{align}
&u(x,y,z,t) = i\exp(t)\sin(\pi x)\sin(\pi y)\sin(\pi z),\quad v(x,y,z,t) = e^{it^2}\sin(x)(1-x)\sin(y)(1-y)\sin(z)(1-z).\notag
\end{align}

The purpose of this example is to verify the effectiveness of the numerical algorithm for solving high-dimensional problems. Tables \ref{table11} and \ref{table12} present the convergence accuracy in the spatial and temporal directions, respectively. These results are consistent with the theoretical analysis, demonstrating that the proposed method in this paper is capable of solving complex high-dimensional system.
\begin{table}
	\centering 
	\caption{Convergence rates for Example \ref{example3} in the spatial direction with simulation parameters \(\theta=0.5\) and \(k=1\).}
	\label{table11}
	\begin{tabular}{ccccccccc}
		\toprule 
		$h$ & $E^1_u$ & $\text{Order}$ & $E^0_u$ & $\text{Order}$ & $E^1_v$ & $\text{Order}$ & $E^0_v$ & $\text{Order}$\\
		\midrule
		$1/5$ & 7.5714e-01 &      --        & 4.1720e-02 &        --      & 1.1628e-02 &        --      & 6.4692e-04 &        --      \\
		$1/10$ & 3.8719e-01 & 0.9675 & 1.0880e-02 & 1.9390 & 5.9444e-03 & 0.9680 & 1.6790e-04 & 1.9460 \\
		$1/15$ & 2.5921e-01 & 0.9897 & 4.8737e-03 & 1.9806 & 3.9792e-03 & 0.9899 & 7.5139e-05 & 1.9829 \\
		$1/20$ & 1.9470e-01 & 0.9949 & 2.7490e-03 & 1.9904 & 2.9887e-03 & 0.9950 & 4.2368e-05 & 1.9916 \\
		\bottomrule
	\end{tabular}
\end{table}

\begin{table}
	\centering 
	\caption{Convergence rates for Example \ref{example3} in the temporal direction with simulation parameters \(\theta=0.5\) and \(k=1\).}
	\label{table12}
	\begin{tabular}{ccccc}
		\toprule 
		$\tau$  & $E^0_u$ & $\text{Order}$ &  $E^0_v$ & $\text{Order}$\\
		\midrule
	$1/5$ &  1.5229e-01 &     --         &  6.4111e-04 &   --           \\
	$1/10$ &  3.8759e-02 & 1.9742 &  1.4889e-04 & 2.1064 \\
	$1/15$ &  1.7331e-02 & 1.9851 &  6.5585e-05 & 2.0220 \\
	$1/20$& 9.7732e-03 & 1.9912 &  3.6749e-05 & 2.0135 \\
		\bottomrule
	\end{tabular}
\end{table}

\end{example}
\end{example}
\section{Conclusions}\label{section5}
In this paper, we construct a family of $\theta$-parameterized fully discrete finite element schemes, which include the standard Crank-Nicolson and leapfrog schemes as special cases.In addition, unlike the analytical and demonstrative approaches employed in existing literature, this paper proposes a relatively straightforward error analysis technique to establish the $L^{\infty}$-norm boundedness of the numerical solution. Future work will be pursued in two main directions. First, the proposed numerical method will be extended to polygonal mesh-based approaches, such as the Virtual Element Method (VEM), Weak Galerkin Finite Element Method (WG), and Hybrid High-Order (HHO) Method. Second, the error analysis techniques developed in this paper will be applied to more complex partial differential equations, including nonlinear time-dependent Joule heating equations and viscoelastic fluid models.

\end{document}